\numberwithin{equation}{section}
\def\N{\mathbb N}
\def\R{\mathbb R}
\def\S{\mathbb S}
\providecommand{\norm}[1]{\left\lVert#1\right\rVert}
\newcommand{\ip}[2]{\left\langle #1, #2 \right\rangle}
\providecommand{\abs}[1]{\left\lvert#1\right\rvert}
\DeclareMathOperator*{\Argmin}{argmin}
\DeclareMathOperator*{\Argmax}{argmax}
\DeclareMathOperator{\Fix}{\mathcal{F}}
\DeclareMathOperator{\AC}{\mathcal{A}}
\newcommand{\sk    }[1]{\left( #1 \right)}
\newcommand{\ck    }[1]{\left\{#1 \right\}}
\newcommand{\CAT}{\textup{CAT}}
\newcommand{\opclitvl}[2]{\left]#1, #2\right]}
\newcommand{\clopitvl}[2]{\left[#1, #2\right[}
\newcommand{\openitvl}[2]{\left]#1, #2\right[}
\theoremstyle{plain}
\newtheorem{theorem}{Theorem}[section]
\newtheorem{lemma}[theorem]{Lemma}
\newtheorem{corollary}[theorem]{Corollary}
\theoremstyle{definition}
\theoremstyle{remark}
\newtheorem{remark}[theorem]{Remark}
\title[The proximal point algorithm in geodesic spaces] 
{The proximal point algorithm in geodesic spaces with curvature bounded above}
\author[Y.~Kimura]{Yasunori~Kimura}
\address[Y.~Kimura]
{Department of Information Science, 
Toho University, 
Miyama, Funabashi, Chiba 274-8510, Japan}
\email{yasunori@is.sci.toho-u.ac.jp}
\author[F.~Kohsaka]{Fumiaki~Kohsaka}
\address[F.~Kohsaka]
{Department of Mathematical Sciences, Tokai University, 
Kitakaname, Hiratsuka, Kanagawa 259-1292, Japan}
\email{f-kohsaka@tsc.u-tokai.ac.jp}
\subjclass[2010]{47H10, 47J05, 52A41, 90C25}
\keywords{$\CAT(1)$ space, convex function, fixed point, 
geodesic space with curvature bounded above, 
minimizer, proximal point algorithm, resolvent}
\begin{document}
\begin{abstract}
We investigate the asymptotic behavior of sequences generated 
by the proximal point algorithm for convex functions 
in complete geodesic spaces with curvature bounded above.  
Using the notion of resolvents of such functions,  
which was recently introduced by the authors, 
we show the existence of minimizers of convex functions 
under the boundedness assumptions on such sequences 
as well as the convergence of such sequences 
to minimizers of given functions. 
\end{abstract}
\maketitle

\section{Introduction}\label{sec:intro}

The aim of this paper is to obtain the following 
two results on the asymptotic behavior of 
sequences generated by the proximal point algorithm for convex functions 
in complete $\CAT(1)$ spaces.  

\begin{theorem}\label{thm:ppa-CAT1-exist}
 Let $X$ be an admissible complete $\CAT(1)$ space, 
 $f$ a proper lower semicontinuous convex 
 function of $X$ into $\opclitvl{-\infty}{\infty}$, 
 $\{\lambda _n\}$ a sequence of positive real numbers 
 such that $\sum _{n=1}^{\infty} \lambda _n =\infty$, 
 and $\{x_n\}$ a sequence defined by 
 $x_1\in X$ and 
 \begin{align}\label{eq:ppa-CAT1}
  x_{n+1} = \Argmin_{y\in X} 
 \ck{f(y)+\frac{1}{\lambda_n} \tan d(y, x_{n})\sin d(y, x_{n})}
 \end{align}
 for all $n\in \N$. 
 Then the set $\Argmin_X f$ of all minimizers of $f$ is nonempty if and only if 
 $\{x_n\}$ is spherically bounded and $\sup_{n}d(x_{n+1},x_{n})<\pi / 2$. 
\end{theorem}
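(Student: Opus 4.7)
The strategy is to identify iteration \eqref{eq:ppa-CAT1} as $x_{n+1}=J_{\lambda_n f}(x_n)$, where $J_{\lambda f}$ is the resolvent of $\lambda f$ in the admissible complete $\CAT(1)$ setting introduced by the authors. Two pillars of the argument are: (i) $\Fix J_{\lambda f}=\Argmin_X f$ for every $\lambda>0$; and (ii) a firmly-nonexpansive-type inequality for $J_{\lambda f}$ that yields both the spherical Fej\'er estimate $\cos d(J_{\lambda f}(x),p)\geq \cos d(x,p)$ for every $p\in\Argmin_X f$, and a quantitative comparison between $f(J_{\lambda f}(x))$ and $f(y)$ for an arbitrary admissible test point $y$, with a geometric penalty controlled by $d(x,J_{\lambda f}(x))$. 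From the defining argmin itself (plug $y=x_n$) one also reads off
\[
f(x_{n+1})+\frac{1}{\lambda_n}\tan d(x_{n+1},x_n)\sin d(x_{n+1},x_n)\leq f(x_n),
\]
so $\{f(x_n)\}$ is nonincreasing.

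\emph{Necessity.} Let $p\in\Argmin_X f$. By (ii), $\{d(x_n,p)\}$ is nonincreasing and $\sup_n d(x_n,p)\leq d(x_1,p)<\pi/2$, which is the required spherical boundedness. A characterization of the resolvent --- namely that the angle at $x_{n+1}$ in the comparison triangle $(p,x_{n+1},x_n)$ is obtuse since $f(p)\leq f(x_{n+1})$ --- combined with the spherical law of cosines gives $\cos d(x_n,p)\leq \cos d(x_{n+1},p)\cos d(x_n,x_{n+1})$, whence $\cos d(x_n,x_{n+1})\geq \cos d(x_n,p)\geq \cos d(x_1,p)>0$, so $\sup_n d(x_n,x_{n+1})\leq d(x_1,p)<\pi/2$.

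\emph{Sufficiency.} Assume $\{x_n\}$ is spherically bounded and $\delta:=\sup_n d(x_{n+1},x_n)<\pi/2$. The monotone sequence $\{f(x_n)\}$ converges to some $\alpha$. If $\alpha>\inf_X f$, pick $y\in X$ with $f(y)<\alpha$ and apply the quantitative half of (ii) to obtain an inequality of the form $\cos d(x_{n+1},y)\geq \cos d(x_n,y)+c\lambda_n$ for some $c>0$ depending on $\delta$, the spherical bound, and $\alpha-f(y)$; summing and invoking $\sum\lambda_n=\infty$ would force $\cos d(x_n,y)>1$, a contradiction. Hence $\alpha=\inf_X f$. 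Spherical boundedness supplies, via the asymptotic-center theory available in admissible complete $\CAT(1)$ spaces, a $\Delta$-cluster point $x^{\ast}$ of $\{x_n\}$, and the $\Delta$-lower semicontinuity of the convex lsc function $f$ yields $f(x^{\ast})\leq\liminf_n f(x_n)=\inf_X f$; hence $x^{\ast}\in\Argmin_X f$.

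\emph{Main obstacle.} The delicate part is sufficiency, where a minimizer must be produced from scratch. The hypothesis $\delta<\pi/2$ is precisely what keeps $\tan d(x_{n+1},x_n)$ bounded, so that the decrement estimate is informative when combined with $\sum\lambda_n=\infty$, while spherical boundedness is precisely what makes an asymptotic center available inside the admissible regime. The substantive work is ensuring that the cosine-law bookkeeping can be carried out with every quantity remaining strictly inside $[0,\pi/2)$, so that $\CAT(1)$-comparison and the resolvent inequality in (ii) remain valid and chainable throughout the induction.
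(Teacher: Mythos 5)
Your proof is correct in outline, and the necessity direction coincides with the paper's: inequality \eqref{eq:cor:res-fund-b} gives exactly your estimate $\cos d(x_{n+1},x_n)\cos d(u,x_{n+1})\geq\cos d(u,x_n)$ for $u\in\Argmin_X f$, from which both spherical boundedness and $\sup_n d(x_{n+1},x_n)\leq d(u,x_1)<\pi/2$ follow at once. The sufficiency direction, however, takes a genuinely different route. The paper forms the concave function $g(y)=\liminf_n\sigma_n^{-1}\sum_{k\le n}\beta_k\cos d(y,x_{k+1})$ with weights $\beta_k=\lambda_k C_{\lambda_k,x_k}^2/(1+C_{\lambda_k,x_k}^2)$, invokes its maximization theorem (Theorem~\ref{thm:argmax}) to get a unique maximizer $p$, and uses the firm-nonexpansiveness inequality \eqref{eq:lem:res-fund-b} to show $g(R_\mu p)\geq g(p)$, hence $R_\mu p=p$ and $p\in\Argmin_X f$ by \eqref{eq:FP-Min}; this ``asymptotic maximizer is a common fixed point of all resolvents'' argument is where the hypotheses $\sum\lambda_n=\infty$ and $\sup_n d(x_{n+1},x_n)<\pi/2$ enter (they guarantee $\sum\beta_n=\infty$). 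You instead show $f(x_n)\downarrow\inf f(X)$ by a telescoping contradiction against an arbitrary test point $y$ with $f(y)<\lim_n f(x_n)$, then extract a $\Delta$-cluster point and finish with $\Delta$-lower semicontinuity (Lemma~\ref{lem:Delta-lsc-CAT1}) --- essentially transplanting the rate argument that the paper uses only later, in Theorem~\ref{thm:ppa-CAT1-conv}, to the existence proof. Your route is more elementary (it bypasses Theorem~\ref{thm:argmax} entirely), while the paper's yields the auxiliary maximization theorem as a result of independent interest. The one point you should make explicit is that the quantitative inequality you rely on is not literally \eqref{eq:cor:res-fund-a} (stated only for $y\in\Argmin_X f$) nor \eqref{eq:lem:res-fund-a} (stated for $R_\mu y$): you need it for an arbitrary $y\in X$ with $f(y)\leq f(R_\lambda x)$. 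This does hold --- the derivation of \eqref{eq:lem:res-fund-a} uses nothing about $R_\mu y$ beyond its being a point of $X$, so the same computation with $z_t=ty\oplus(1-t)R_\lambda x$ gives
\begin{align*}
\sk{\frac{1}{C_{\lambda,x}^2}+1}d(R_\lambda x,y)\bigl(C_{\lambda,x}\cos d(R_\lambda x,y)-\cos d(y,x)\bigr)\geq\lambda\bigl(f(R_\lambda x)-f(y)\bigr)\sin d(R_\lambda x,y),
\end{align*}
and then $\sin t\geq 2t/\pi$, the bound $1/C_{\lambda_n,x_n}^2+1\leq 1/\cos^2\delta+1$, and $C_{\lambda_n,x_n}\leq 1$ deliver your $\cos d(x_{n+1},y)\geq\cos d(x_n,y)+c\lambda_n$ --- but it is an extension you must state and justify, not a citation. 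Also take care with the degenerate case $\lim_n f(x_n)=-\infty$: there is then no admissible test point, so you must first use the cluster point and $f>-\infty$ to rule it out before running the contradiction.
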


\begin{theorem}\label{thm:ppa-CAT1-conv}
 Let $X$, $f$, $\{\lambda_n\}$, and $\{x_n\}$ 
 be the same as in Theorem~\ref{thm:ppa-CAT1-exist} 
 and suppose that $\Argmin_X f$ is nonempty. 
 Then the following hold. 
 \begin{enumerate}
  \item[(i)] There exists a positive real number $C$ such that 
 \begin{align*}
  f(x_{n+1}) - \inf f(X) \leq \frac{C}{\sum_{k=1}^{n}\lambda _k}
 \bigl(1-\cos d(u, x_1)\bigr)
 \end{align*}
 for all $u\in \Argmin_X f$ and $n\in \N$; 
  \item[(ii)] $\{x_n\}$ is 
 $\Delta$-convergent to an element of $\Argmin_X f$. 
 \end{enumerate}
\end{theorem}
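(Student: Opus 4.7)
The plan is to exploit the resolvent structure behind (\ref{eq:ppa-CAT1}). Since $x_{n+1}$ is the minimizer of $y\mapsto f(y)+\lambda_n^{-1}\tan d(y,x_n)\sin d(y,x_n)$, it is the image of $x_n$ under the resolvent $J_{\lambda_n}$ of $f$ introduced by the authors in their earlier work on $\CAT(1)$ spaces. The whole argument is driven by a single firm-nonexpansive-type inequality for $J_{\lambda_n}$.

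For part (i), I would first establish (or invoke from the earlier work) the key estimate
\begin{align*}
 \lambda_n\bigl(f(x_{n+1})-f(u)\bigr)\cos d(x_{n+1},x_n)\leq \bigl(1-\cos d(u,x_n)\bigr)-\bigl(1-\cos d(u,x_{n+1})\bigr)
\end{align*}
for every $u\in X$, obtained by inserting the geodesic from $x_{n+1}$ to $u$ into the minimization condition and applying the $\CAT(1)$ comparison inequality to the perturbation term $\tan d(\cdot,x_n)\sin d(\cdot,x_n)$. This is the natural replacement of the Hilbert/CAT$(0)$ estimate $\lambda_n(f(x_{n+1})-f(u))\leq \frac12(d(u,x_n)^2-d(u,x_{n+1})^2)$. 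Taking $u=x_n$ shows that $\{f(x_n)\}$ is nonincreasing. Taking $u\in\Argmin_X f$ shows that $\{1-\cos d(u,x_n)\}$ is nonincreasing, hence $d(u,x_n)$ stays uniformly below $\pi/2$, and therefore so does $d(x_{n+1},x_n)$, giving $\cos d(x_{n+1},x_n)\geq c_0>0$ for some absolute constant $c_0$. Telescoping the key estimate from $k=1$ to $n$ and using $f(x_{n+1})\leq f(x_{k+1})$ yields
\begin{align*}
 c_0\Bigl(\sum_{k=1}^n\lambda_k\Bigr)\bigl(f(x_{n+1})-\inf f(X)\bigr)\leq \bigl(1-\cos d(u,x_1)\bigr),
\end{align*}
which is exactly (i) with $C=c_0^{-1}$.

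For (ii), combining (i) with $\sum_n\lambda_n=\infty$ gives $f(x_n)\to\inf f(X)$, while the Fej\'er estimate above gives spherical boundedness. A further telescoping of the key estimate applied with $u=x_n$ produces asymptotic regularity $d(x_n,x_{n+1})\to 0$. Now let $v$ be any $\Delta$-cluster point of $\{x_n\}$: lower semicontinuity of $f$ with respect to $\Delta$-convergence in admissible complete $\CAT(1)$ spaces, together with $f(x_n)\to\inf f(X)$, forces $v\in\Argmin_X f$. Since $\{1-\cos d(u,x_n)\}$ converges for every $u\in\Argmin_X f$, a standard Opial-type argument using uniqueness of the asymptotic center of a spherically bounded sequence in a complete $\CAT(1)$ space excludes two distinct $\Delta$-cluster points and thereby yields $\Delta$-convergence of $\{x_n\}$ to an element of $\Argmin_X f$.

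The main obstacle is pinning down the correct firm-nonexpansive identity with the precise trigonometric coefficients that both telescope cleanly and produce exactly the factor $1-\cos d(u,x_1)$ on the right-hand side of (i). This inequality does not follow from the $\CAT(0)$ analogue by formal substitution; it relies crucially on the admissibility of $X$ to keep the arguments of $\tan$ bounded away from $\pi/2$, and it is precisely the uniformly positive factor $\cos d(x_{n+1},x_n)$ that is responsible for the constant $C$ appearing in the statement.
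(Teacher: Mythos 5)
Your overall architecture---telescoping a resolvent inequality for (i), then Fej\'er monotonicity, $\Delta$-lower semicontinuity, and an Opial-type lemma for (ii)---is exactly the paper's, and part (ii) as you sketch it is essentially complete (the asymptotic regularity $d(x_n,x_{n+1})\to 0$ you invoke is not actually needed). The genuine gap is the ``key estimate'' itself: you assert
\begin{align*}
\lambda_n\bigl(f(x_{n+1})-f(u)\bigr)\cos d(x_{n+1},x_n)\leq \cos d(u,x_{n+1})-\cos d(u,x_n)
\end{align*}
for every $u\in X$, but you do not derive it, and you concede in your last paragraph that pinning it down is ``the main obstacle.'' That obstacle is the entire content of part (i). Moreover, the computation you describe (perturbing along the geodesic from $x_{n+1}$ to $u$ and applying the spherical comparison inequality) does not produce your inequality; it produces the paper's Lemma~\ref{lem:res-fund} and Corollary~\ref{cor:res-fund}, namely, with $C=\cos d(x_{n+1},x_n)$ and $u\in\Argmin_X f$,
\begin{align*}
\lambda_n\bigl(f(x_{n+1})-f(u)\bigr)\leq \frac{\pi}{2}\Bigl(\frac{1}{C^2}+1\Bigr)\bigl(C\cos d(u,x_{n+1})-\cos d(u,x_n)\bigr),
\end{align*}
which has the factor $C$ \emph{inside} the difference and the non-clean coefficient $\frac{\pi}{2}(1/C^{2}+1)$ outside. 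Your version is a strictly sharper, cleanly telescoping statement that does not follow from this derivation, and the restriction to $u\in\Argmin_X f$ is not cosmetic: passing from $\sin d(u,x_{n+1})$ to $\frac{2}{\pi}d(u,x_{n+1})$ and then cancelling $d(u,x_{n+1})$ requires $f(x_{n+1})-f(u)\ge 0$. The paper telescopes the weaker inequality anyway, using $C\le 1$ and $\cos d(u,x_{n+1})\ge 0$ (admissibility) to dominate $C\cos d(u,x_{n+1})-\cos d(u,x_n)$ by $\cos d(u,x_{n+1})-\cos d(u,x_n)$, and bounding the outer coefficient by a uniform constant $K\pi/2$, which is where the $C$ in the statement of (i) comes from.

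A second, smaller gap: you need $\cos d(x_{n+1},x_n)\ge c_0>0$ uniformly, and you deduce $\sup_n d(x_{n+1},x_n)<\pi/2$ from Fej\'er monotonicity ``and therefore''; the triangle inequality only gives $d(x_{n+1},x_n)\le 2d(u,x_1)$, which need not be below $\pi/2$. The correct source is the separate resolvent inequality $\cos d(x_{n+1},x_n)\cos d(u,x_{n+1})\ge\cos d(u,x_n)$ in~\eqref{eq:cor:res-fund-b}, which yields $d(x_{n+1},x_n)\le d(u,x_n)\le d(u,x_1)<\pi/2$; this is precisely how the paper obtains the bound in the proof of Theorem~\ref{thm:ppa-CAT1-exist}, which Theorem~\ref{thm:ppa-CAT1-conv} then cites.
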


It should be noted that, in this paper, we say that 
a $\CAT(1)$ space $X$ is admissible if 
$d(v,v')<\pi/2$ for all $v,v'\in X$.  
We also say that a sequence $\{x_n\}$ in a $\CAT(1)$ space $X$ 
is spherically bounded if 
\begin{align}\label{eq:spherical-bdd}
 \inf_{y\in X} \limsup _{n\to \infty} d(y, x_n) < \frac{\pi}{2}.    
\end{align}

The proximal point algorithm, introduced by 
Martinet~\cite{MR0298899} and Rockafellar~\cite{MR0410483}, 
is an approximation method for finding 
a minimizer of a proper lower semicontinuous convex function 
$f$ of a real Hilbert space $X$ into $\opclitvl{-\infty}{\infty}$. 
This algorithm generates a sequence $\{x_n\}$ by $x_1\in X$ and 
\begin{align}\label{eq:ppa}
 x_{n+1} = \Argmin_{y\in X} 
 \ck{f(y)+\frac{1}{2\lambda _n} \norm{y-x_{n}}^2}
\end{align}
for all $n\in \N$, where $\{\lambda _n\}$ is a sequence 
of positive real numbers. 
It is well known that the right hand side of~\eqref{eq:ppa} 
consists of one point $p\in X$. 
We identify the set $\{p\}$ with $p$ in this case. 
Using the resolvent $J_{f}$ of $f$ given by 
\begin{align}\label{eq:res-Hilbert}
 J_{f}x=\Argmin_{y\in X} \ck{f(y) 
+ \frac{1}{2} \norm{y-x}^2}
\end{align}
for all $x\in X$, we can write the scheme~\eqref{eq:ppa} 
as $x_{n+1}=J_{\lambda _n f}x_n$ 
for all $n\in \N$. 
See~\cite{MR2798533, MR2548424} for more details 
on convex analysis in Hilbert spaces.  

In 1976, Rockafellar~\cite[Theorem~1]{MR0410483} showed that 
if $\inf_{n}\lambda _n >0$, then 
the set $\Argmin_X f$ is nonempty if and only if $\{x_n\}$ is bounded, 
and that if $\Argmin_X f$ is nonempty, then 
$\{x_n\}$ is weakly convergent to an element of $\Argmin_X f$. 
In 1978, Brezis and Lions~\cite[Th\'eor\`eme~9]{MR491922} 
showed the weak convergence of 
$\{x_n\}$ to an element of $\Argmin_X f$ 
under a weaker condition that 
$\Argmin_X f$ is nonempty and 
$\sum_{n=1}^{\infty}\lambda _n =\infty$.   
Later, G{\"u}ler~\cite[Corollary~5.1]{MR1092735} 
found an example of $\{x_n\}$ in the Hilbert space $\ell^{2}$ 
which does not converge strongly. 

On the other hand, in 1995, Jost~\cite{MR1360608} 
generalized the concept of resolvent given by~\eqref{eq:res-Hilbert} 
in Hilbert spaces to that in more general complete $\CAT(0)$ spaces. 
According to~\cite[Section~2.2]{MR3241330},~\cite[Lemma~2]{MR1360608}, 
and~\cite[Section~1.3]{MR1651416}, 
if $f$ is a proper lower semicontinuous convex
function of a complete $\CAT(0)$ space $X$ into $\opclitvl{-\infty}{\infty}$, 
then the resolvent $J_f$ of $f$ given by 
\begin{align}\label{eq:res-CAT0}
 J_{f}x= \Argmin _{y\in X} \ck{f(y) + \frac{1}{2} d(y, x)^2}
\end{align}
for all $x\in X$ 
is a well defined single valued nonexpansive mapping of $X$ into itself. 
We also know that its fixed point set $\Fix (J_{f})$ 
is equal to $\Argmin _{X}f$. 
See~\cite{MR3241330, MR1451625, MR1652278} 
for more details on this concept. 

In 2013, Ba{\v{c}}{\'a}k~\cite[Theorem~1.4 and Remark~1.6]{MR3047087} 
generalized the result by 
Brezis and Lions~\cite[Th\'eor\`eme~9]{MR491922} 
to the complete $\CAT(0)$ space setting as follows.  
Note that $\Delta$-convergence is called 
weak convergence in~\cite{MR3047087}. 
\begin{theorem}[{\cite[Theorem~1.4 and Remark~1.6]{MR3047087}}]
 \label{thm:Bacak}
 Let $X$ be a complete $\CAT(0)$ space, 
 $f$ a proper lower semicontinuous convex 
 function of $X$ into $\opclitvl{-\infty}{\infty}$ 
 such that $\Argmin_X f$ is nonempty, 
 $J_{\lambda f}$ the resolvent of $\lambda f$ for each $\lambda >0$, 
 $\{\lambda _n\}$ a sequence of positive real numbers 
 such that $\sum _{n=1}^{\infty} \lambda _n =\infty$, 
 and $\{x_n\}$ a sequence defined by 
 $x_1\in X$ and 
$x_{n+1} = J_{\lambda _n f} x_n$ 
 for all $n\in \N$. 
 Then $\{x_n\}$ is $\Delta$-convergent 
 to an element of $\Argmin_X f$ and 
 \begin{align*}
  f(x_{n+1}) - \inf f(X) \leq \frac{1}{2\sum_{k=1}^{n}\lambda _k}
  d(u, x_1)^2
 \end{align*}
 for all $u\in \Argmin_X f$ and $n\in \N$. 
\end{theorem}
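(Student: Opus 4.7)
The plan is to extract everything from one variational inequality satisfied by the resolvent in a complete $\CAT(0)$ space: if $u=J_{\lambda f}x$, then for every $y\in X$,
\begin{align*}
 2\lambda \bigl[f(u)-f(y)\bigr] \leq d(y,x)^2 - d(y,u)^2 - d(u,x)^2.
\end{align*}
This follows from the optimality of $u$: one compares $f(u)+\frac{1}{2\lambda}d(u,x)^2$ with the value of the same functional at $\gamma(t)$, where $\gamma\colon[0,1]\to X$ is the geodesic from $u$ to $y$, and combines the convexity of $f$ along $\gamma$ with the $\CAT(0)$ inequality $d(\gamma(t),x)^2\leq (1-t)d(u,x)^2+t\,d(y,x)^2-t(1-t)d(u,y)^2$; dividing by $t$ and letting $t\to 0^+$ yields the displayed bound.

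Specializing to $x=x_n$ and $u=x_{n+1}=J_{\lambda_n f}x_n$ gives, for every $y\in X$,
\begin{align*}
 2\lambda_n\bigl[f(x_{n+1})-f(y)\bigr] \leq d(y,x_n)^2 - d(y,x_{n+1})^2 - d(x_{n+1},x_n)^2.
\end{align*}
Setting $y=x_n$ produces $f(x_{n+1})\leq f(x_n)$, so $\{f(x_n)\}$ is nonincreasing. Setting $y=u\in\Argmin_X f$ and using $f(x_{n+1})\geq f(u)$ gives $d(u,x_{n+1})\leq d(u,x_n)$, so $\{x_n\}$ is Fej\'er monotone with respect to $\Argmin_X f$ and, in particular, bounded. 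To obtain (i), telescope the previous display with $y=u$ from $k=1$ to $n$:
\begin{align*}
 2\sum_{k=1}^{n}\lambda_k\bigl[f(x_{k+1})-f(u)\bigr] \leq d(u,x_1)^2 - d(u,x_{n+1})^2 \leq d(u,x_1)^2,
\end{align*}
and then use $f(x_{k+1})\geq f(x_{n+1})$ for every $k\leq n$ to factor $f(x_{n+1})-f(u)$ out of the sum, yielding the claimed bound $f(x_{n+1})-\inf f(X)\leq d(u,x_1)^2/\bigl(2\sum_{k=1}^{n}\lambda_k\bigr)$.

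For (ii), the rate estimate together with $\sum_{k}\lambda_k=\infty$ forces $f(x_n)\downarrow \inf f(X)$. Boundedness of $\{x_n\}$ supplies $\Delta$-cluster points, and every such $\Delta$-limit $v$ satisfies $f(v)\leq\liminf_{n}f(x_n)=\inf f(X)$ by the $\Delta$-lower semicontinuity of a proper lower semicontinuous convex function on a complete $\CAT(0)$ space, so $v\in\Argmin_X f$. An Opial-type argument, resting on the uniqueness of asymptotic centers of subsequences in a complete $\CAT(0)$ space, then upgrades Fej\'er monotonicity with respect to $\Argmin_X f$, combined with the inclusion of every $\Delta$-cluster point in $\Argmin_X f$, to full $\Delta$-convergence of $\{x_n\}$ to a single element of $\Argmin_X f$. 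The main obstacle is precisely this last step: the variational inequality and the telescoping calculation are essentially automatic once the resolvent characterization is in place, but the $\Delta$-convergence conclusion depends on a package of $\CAT(0)$ results — existence and uniqueness of asymptotic centers, $\Delta$-lower semicontinuity of lower semicontinuous convex functions, and the Opial-type closure lemma for Fej\'er-monotone sequences — that must be cited from the literature rather than rebuilt inside the proof.
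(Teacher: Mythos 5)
Your argument is correct: the variational inequality $2\lambda\bigl(f(J_{\lambda f}x)-f(y)\bigr)\leq d(y,x)^2-d(y,J_{\lambda f}x)^2-d(J_{\lambda f}x,x)^2$ is the standard resolvent inequality in $\CAT(0)$ spaces, and the monotonicity, Fej\'er monotonicity, telescoping, and Opial-type steps all go through as you describe. Note, however, that the paper does not prove this statement at all --- it is quoted from Ba{\v{c}}{\'a}k as background --- so there is nothing to compare against except the paper's proofs of its own $\CAT(1)$ analogues (Theorems~\ref{thm:ppa-CAT1-exist} and~\ref{thm:ppa-CAT1-conv}), which follow exactly your blueprint with the quadratic inequality replaced by Lemma~\ref{lem:res-fund}, Fej\'er monotonicity by~\eqref{eq:cor:res-fund-b}, and the closure lemma by Lemma~\ref{lem:Delta_conv}.
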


Recently, the authors~\cite{MR3463526} 
introduced the concept of 
resolvents of convex functions in complete $\CAT(1)$ spaces 
and studied the existence and approximation of 
fixed points of mappings related to this concept.  
Considering the geometric difference between 
$\CAT(0)$ and $\CAT(1)$ spaces, 
they replaced $d(y, x)^2/2$ in~\eqref{eq:res-CAT0}  
with $\tan d(y, x) \sin d(y, x)$ in the definition of resolvent below.  
According to~\cite[Theorems~4.2 and~4.6]{MR3463526}, 
if $f$ is a proper lower semicontinuous convex function 
of an admissible complete $\CAT(1)$ space $X$ 
into $\opclitvl{-\infty}{\infty}$,  
then the resolvent $R_{f}$ of $f$ given by 
\begin{align}\label{eq:resolvent-CAT1}
 R_{f}x =\Argmin_{y\in X} 
 \bigl\{
 f(y) +\tan d(y, x) \sin d(y, x)\bigr\}
\end{align}
for all $x\in X$ 
is a well defined single valued mapping of $X$ into itself such that 
\begin{align}\label{eq:FP-Min}
 \Fix (R_{f})=\Argmin_X f
\end{align}
and 
\begin{align}
 \begin{split}\label{eq:SFSN-resolvent}
  &\Bigl(C_x^2(1+C_y^2)C_y+C_y^2(1+C_x^2)C_x\Bigr) \cos d(R_{f}x,R_{f}y) \\
  &\quad \geq C_x^2(1+C_y^2)\cos d(R_{f}x,y)
 +C_y^2(1+C_x^2)\cos d(R_{f}y, x)
 \end{split}
\end{align}
for all $x,y\in X$, where $C_z=\cos d(R_{f}z,z)$ for all $z\in X$. 
Using this concept, we can write the scheme~\eqref{eq:ppa-CAT1} as 
\begin{align*}
 x_{n+1}=R_{\lambda _n f}x_n
\end{align*}
for all $n\in \N$. 
The function $t\mapsto \tan t \sin t$ 
used in~\eqref{eq:resolvent-CAT1}   
is obviously a strictly increasing, continuous, and convex function 
on $\clopitvl{0}{\pi/2}$ such that 
$\tan 0 \sin 0=0$ and $\tan t \sin t \to \infty$ as $t\uparrow \pi/2$.  
These properties are similar to those of 
the function $t\mapsto t^2$ on $\clopitvl{0}{\infty}$ 
used in~\eqref{eq:res-CAT0}.  
Note that the diameters of the model spaces $\S^2$ and $\R^2$ 
of $\CAT(1)$ and $\CAT(0)$ spaces 
coincide with $\pi$ and $\infty$, respectively 
and that the second order Maclaurin approximation of 
the function $t\mapsto \tan t \sin t$ 
is equal to $t\mapsto t^2$.  

This paper is organized as follows. 
In Section~\ref{sec:pre}, 
we recall some definitions and results needed in this paper. 
In Section~\ref{sec:fund}, 
we obtain some fundamental properties of 
resolvents of convex functions in $\CAT(1)$ spaces. 
In Section~\ref{sec:PPA}, after obtaining Theorem~\ref{thm:argmax}, 
a maximization theorem in $\CAT(1)$ spaces, 
we give the proofs of Theorems~\ref{thm:ppa-CAT1-exist} 
and~\ref{thm:ppa-CAT1-conv}. 
In Section~\ref{sec:cor}, we obtain three corollaries 
of Theorems~\ref{thm:ppa-CAT1-exist} 
and~\ref{thm:ppa-CAT1-conv}. 

\section{Preliminaries}\label{sec:pre}

Throughout this paper, we denote by 
$\N$ the set of all positive integers, 
$\R$ the set of all real numbers, 
and $\Fix(T)$ the set of all fixed points of a mapping $T$. 

A metric space $X$ with metric $d$ is said to be 
uniquely $\pi$-geodesic 
if for each $x,y\in X$ with $d(x,y)<\pi$, 
there exists a unique mapping $c$ of $[0,l]$ into $X$ such that 
$d\bigl(c(t), c(t')\bigr)=\abs{t-t'}$ for all $t,t'\in [0,l]$, 
$c(0)=x$, and $c(l)=y$, where $l=d(x,y)$. 
The mapping $c$ is called the geodesic 
from $x$ to $y$ and 
the set $[x,y]$, which is defined as the image of $c$, 
is called the geodesic segment between $x$ and $y$. 
We also denote by $\alpha x \oplus (1-\alpha)y$ 
the point $c\bigl((1-\alpha)d(x,y)\bigr)$ 
for each $\alpha \in [0,1]$. 

Let $H$ be a real Hilbert space with inner product 
$\ip{\,\cdot\,}{\,\cdot\,}$ and 
the induced norm $\norm{\,\cdot \,}$. 
We know that the unit sphere $S_H$ of $H$ 
is a complete metric space with 
the spherical metric $\rho_{S_H}$ defined by 
\begin{align*}
 \rho _{S_H}(x,y) =\arccos \ip{x}{y}
\end{align*}
for each $x,y\in S_H$. 
It is also known that $S_H$ is uniquely $\pi$-geodesic. 
For each distinct $x,y\in S_H$ such that $\rho _{S_H}(x,y) <\pi$, 
the unique geodesic $c$ from $x$ to $y$ 
is given by 
\begin{align*}
 c(t) = (\cos t) x + (\sin t)\cdot \frac{y-\ip{y}{x}x}{\norm{y-\ip{y}{x}x}}
\end{align*}
for all $t\in [0,\rho _{S_H}(x,y)]$. 
We denote by $\mathbb{S}^2$ the unit sphere of 
the three dimensional Euclidean space $\R^3$. 

It is known~\cite[Lemma~2.14 in Chapter~I.2]{MR1744486} 
that if $X$ is a uniquely $\pi$-geodesic space 
and $x_1,x_2,x_3$ are points in $X$ satisfying 
\begin{align}\label{eq:three_point_cond}
 d(x_1,x_2) + d(x_2,x_3) + d(x_3,x_1) < 2\pi, 
\end{align}
then there exist $\bar{x}_1,\bar{x}_2,\bar{x}_3\in \mathbb{S}^2$ 
such that 
\begin{align*}
 d(x_{i},x_{j})=\rho_{\mathbb{S}^2} (\bar{x}_i,\bar{x}_j)
\end{align*}
for all $i, j\in \{1,2,3\}$. 
The sets $\Delta$ and $\bar{\Delta}$ given by 
\begin{align*}
 \Delta =[x_1,x_2]\cup[x_2,x_3]\cup[x_3,x_1]
\quad \textrm{and} \quad 
  \bar{\Delta} =[\bar{x}_1,\bar{x}_2]\cup[\bar{x}_2,\bar{x}_3]
 \cup[\bar{x}_3,\bar{x}_1]
\end{align*}
are called the geodesic triangle with vertices 
$x_1,x_2,x_3$ and 
a comparison triangle for $\Delta$, 
respectively. 
A point $\bar{p}\in \bar{\Delta}$ is 
called a comparison point for $p\in \Delta$ 
if $p\in [x_i, x_j]$, $\bar{p}\in [\bar{x}_i, \bar{x}_j]$, 
and $d(x_i,p)=\rho_{\mathbb{S}^2}(\bar{x}_i, \bar{p})$ 
for some distinct $i, j \in \{1,2,3\}$.  

A uniquely $\pi$-geodesic space $X$ is called a $\CAT(1)$ space if 
\begin{align*}
 d(p, q)\leq \rho_{\mathbb{S}^2}(\bar{p},\bar{q})
\end{align*}
whenever $\Delta$ is a geodesic triangle 
with vertices $x_1,x_2,x_3 \in X$ satisfying~\eqref{eq:three_point_cond}, 
$\bar{\Delta}$ is a comparison triangle for $\Delta$, 
and $\bar{p}, \bar{q}\in \bar{\Delta}$ 
are comparison points for $p,q\in \Delta$, respectively. 
We know that all nonempty closed convex subsets of 
a real Hilbert space $H$, the space $(S_H,\rho_{S_H})$, 
and all complete $\CAT(0)$ spaces 
are complete $\CAT(1)$ spaces. 
The complete $\CAT(1)$ space $(S_H, \rho_{S_H})$ 
is particularly called a Hilbert sphere. 
See~\cite{MR1744486} 
for more details on geodesic spaces. 

The following lemma plays a fundamental role in the 
study of $\CAT(1)$ spaces. 

\begin{lemma}[{\cite[Corollary~2.2]{MR2927571}}]
\label{lem:KS-inequality}
 Let $X$ be a $\CAT(1)$ space and 
 $x_1, x_2, x_3$ points of $X$ such that~\eqref{eq:three_point_cond} 
 holds. 
 If $\alpha \in [0,1]$, then 
 \begin{align*}
  \begin{split}
  &\cos d\bigl( \alpha x_1 \oplus (1-\alpha)x_2 , x_3\bigr) \sin d(x_1,x_2) \\
  &\geq \cos d(x_1, x_3) \sin \bigl(\alpha d(x_1,x_2)\bigr) 
  +\cos d(x_2, x_3) \sin \bigl((1-\alpha) d(x_1,x_2)\bigr). 
  \end{split}
 \end{align*}
\end{lemma}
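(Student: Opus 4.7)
The plan is to reduce the inequality on $X$ to an identity on the model sphere $\mathbb{S}^2$ and then verify that identity by a short direct computation using the explicit geodesic formula recalled in Section~\ref{sec:pre}. First, I would dispose of the degenerate case: if $d(x_1,x_2)=0$, both sides vanish. Otherwise set $l:=d(x_1,x_2)\in \openitvl{0}{\pi}$, so $\sin l>0$, and let $m:=\alpha x_1\oplus (1-\alpha)x_2$. Since $x_1,x_2,x_3$ satisfy~\eqref{eq:three_point_cond}, the $\CAT(1)$ axiom provides a comparison triangle with vertices $\bar{x}_1,\bar{x}_2,\bar{x}_3\in \mathbb{S}^2$ and a comparison point $\bar{m}\in[\bar{x}_1,\bar{x}_2]$ for $m$, and gives $d(m,x_3)\leq \rho_{\mathbb{S}^2}(\bar{m},\bar{x}_3)$. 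Since both distances lie in $[0,\pi]$ and $\cos$ is decreasing there, this upgrades to $\cos d(m,x_3)\geq \cos\rho_{\mathbb{S}^2}(\bar{m},\bar{x}_3)$. Multiplying by $\sin l>0$, it therefore suffices to prove the desired inequality on the model side, with $\bar{x}_i$ in place of $x_i$ and $\bar{m}$ in place of $m$.

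I would then verify the model inequality as an \emph{equality} in $\mathbb{S}^2\subset\R^3$. Writing the comparison points as unit vectors $u_i:=\bar{x}_i$, the explicit formula
\begin{align*}
 c(t)=(\cos t)u_1+(\sin t)\cdot \frac{u_2-\ip{u_2}{u_1}u_1}{\norm{u_2-\ip{u_2}{u_1}u_1}}
\end{align*}
together with $\cos l=\ip{u_1}{u_2}$ and $\sin l=\norm{u_2-\ip{u_2}{u_1}u_1}$ yields, after combining the coefficients of $u_1$ via the subtraction formula for sine,
\begin{align*}
 c(t)\sin l = \sin(l-t)\,u_1 + \sin t\,u_2.
\end{align*}
Taking the inner product with $u_3$ and using $\ip{c(t)}{u_3}=\cos \rho_{\mathbb{S}^2}(c(t),u_3)$ and $\ip{u_i}{u_3}=\cos\rho_{\mathbb{S}^2}(u_i,u_3)$, and specializing to $t=(1-\alpha)l$ (so $l-t=\alpha l$), gives
\begin{align*}
 \cos\rho_{\mathbb{S}^2}(\bar{m},\bar{x}_3)\sin l
 =\cos\rho_{\mathbb{S}^2}(\bar{x}_1,\bar{x}_3)\sin(\alpha l)
 +\cos\rho_{\mathbb{S}^2}(\bar{x}_2,\bar{x}_3)\sin((1-\alpha)l).
\end{align*}
Converting the spherical distances back to $d(x_i,x_j)$ via the comparison triangle and chaining with $\cos d(m,x_3)\sin l\geq \cos\rho_{\mathbb{S}^2}(\bar{m},\bar{x}_3)\sin l$ gives exactly the stated inequality.

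The only steps requiring care are bookkeeping rather than depth: confirming that $l<\pi$ (so that $\sin l>0$ and the geodesic $c$ is available), checking that $d(m,x_3)$ and $\rho_{\mathbb{S}^2}(\bar{m},\bar{x}_3)$ both lie in $[0,\pi]$ so that monotonicity of $\cos$ flips the direction correctly, and arranging the parametrization so that the convention $\alpha x_1\oplus (1-\alpha)x_2=c((1-\alpha)l)$ matches $t=(1-\alpha)l$, $l-t=\alpha l$ in the final identification. These are the main places a sign or off-by-one in the parameter could slip in, so I would fix the parametrization at the start and carry it through consistently.
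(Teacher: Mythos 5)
Your proof is correct. Note that the paper itself gives no proof of this lemma---it is imported verbatim from \cite[Corollary~2.2]{MR2927571}---so there is nothing internal to compare against; your argument (reduce to the comparison triangle on $\mathbb{S}^2$ via the $\CAT(1)$ inequality and monotonicity of $\cos$ on $[0,\pi]$, then verify the model inequality as an exact identity $c(t)\sin l=\sin(l-t)\,u_1+\sin t\,u_2$ by pairing with $u_3$) is the standard route and is essentially how the cited source proceeds. The bookkeeping points you flag all resolve as you expect: $l<\pi$ follows from the triangle inequality together with~\eqref{eq:three_point_cond}, since $2d(x_1,x_2)\leq d(x_1,x_2)+d(x_2,x_3)+d(x_3,x_1)<2\pi$, which also guarantees $\bar{x}_1\neq\pm\bar{x}_2$ so the explicit geodesic formula applies; and the convention $\alpha x_1\oplus(1-\alpha)x_2=c\bigl((1-\alpha)l\bigr)$ makes the endpoint cases $\alpha=0,1$ reduce to equalities, confirming the parametrization is matched correctly.
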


We also know the following.  
\begin{lemma}[See the proof of~{\cite[Lemma~4.1]{MR3020188}}]
\label{lem:KS-inequality-Mid}
 Let $X$,  $x_1$, $x_2$, and $x_3$ be the same 
 as in Lemma~\ref{lem:KS-inequality}. 
 Then 
 \begin{align*}
  \cos d\Bigl( \frac{1}{2} x_1 \oplus \frac{1}{2}x_2, x_3\Bigr) 
    \cos \Bigl(\frac{1}{2}d(x_1,x_2)\Bigr) 
  \geq \frac{1}{2}\cos d(x_1, x_3) + \frac{1}{2}\cos d(x_2,x_3). 
 \end{align*}
\end{lemma}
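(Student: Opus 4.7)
The plan is to derive this as a direct consequence of Lemma~\ref{lem:KS-inequality} by specializing to the midpoint case $\alpha = 1/2$ and invoking the sine double-angle identity.

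First, I would apply Lemma~\ref{lem:KS-inequality} with $\alpha = 1/2$, noting that the three-point condition~\eqref{eq:three_point_cond} is inherited directly from the hypothesis. This yields
\begin{align*}
 \cos d\Bigl(\tfrac{1}{2} x_1 \oplus \tfrac{1}{2}x_2, x_3\Bigr) \sin d(x_1,x_2)
 \geq \sin\Bigl(\tfrac{1}{2}d(x_1,x_2)\Bigr)\bigl(\cos d(x_1,x_3) + \cos d(x_2,x_3)\bigr),
\end{align*}
where I have factored the common sine term on the right hand side.

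Next, I would treat the cases $x_1=x_2$ and $x_1\neq x_2$ separately. If $x_1=x_2$, the inequality reduces to $\cos d(x_1,x_3)\geq \cos d(x_1,x_3)$, which holds trivially. If $x_1\neq x_2$, then $0<d(x_1,x_2)<\pi$ by~\eqref{eq:three_point_cond}, so $\sin\bigl(\tfrac{1}{2}d(x_1,x_2)\bigr)>0$. Rewriting $\sin d(x_1,x_2) = 2\sin\bigl(\tfrac{1}{2}d(x_1,x_2)\bigr)\cos\bigl(\tfrac{1}{2}d(x_1,x_2)\bigr)$ and dividing both sides by $2\sin\bigl(\tfrac{1}{2}d(x_1,x_2)\bigr)$ then gives the desired inequality.

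There is essentially no obstacle here; the argument is a one-line rearrangement of Lemma~\ref{lem:KS-inequality}. The only minor point to be careful about is ensuring that the denominator $\sin\bigl(\tfrac{1}{2}d(x_1,x_2)\bigr)$ is strictly positive, which is handled by isolating the trivial case $x_1=x_2$.
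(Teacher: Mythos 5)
Your proposal is correct and is essentially the same argument the paper points to (the cited proof specializes the inequality of Lemma~\ref{lem:KS-inequality} to $\alpha=1/2$ and applies the double-angle identity $\sin\theta = 2\sin(\theta/2)\cos(\theta/2)$). Your handling of the degenerate case $x_1=x_2$ and the observation that $d(x_1,x_2)<\pi$ (hence $\sin\bigl(\tfrac{1}{2}d(x_1,x_2)\bigr)>0$) follows from~\eqref{eq:three_point_cond} via the triangle inequality are both sound.
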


\begin{lemma}[See, for instance,~{\cite[Lemma~2.3]{MR3463526}}]
\label{lem:CAT1-CMS}
 Let $X$, $x_1$, $x_2$, and $x_3$ be the same 
 as in Lemma~\ref{lem:KS-inequality}. 
 If $d(x_1,x_3) \leq \pi/2$, $d(x_2,x_3)\leq \pi/2$, and $\alpha \in [0,1]$, then 
 \begin{align*}
  \cos d\bigl(\alpha x_1 \oplus (1-\alpha) x_2, x_3\bigr) 
  \geq \alpha \cos d(x_1, x_3) + (1-\alpha) \cos d(x_2,x_3). 
 \end{align*}
\end{lemma}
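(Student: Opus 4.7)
The plan is to derive the claimed linear convex-combination inequality from the sharper trigonometric inequality of Lemma~\ref{lem:KS-inequality} by replacing its sine-weights by their linear lower bounds; the hypothesis $d(x_i,x_3)\le \pi/2$ enters precisely to keep this replacement going in the correct direction.

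First I would dispatch the degenerate case $d(x_1,x_2)=0$, in which $x_1=x_2$ and both sides reduce to $\cos d(x_1,x_3)$. For $d(x_1,x_2)>0$, the triangle inequality gives $d(x_1,x_2)\le d(x_1,x_3)+d(x_2,x_3)\le \pi$, and equality cannot occur, for then \eqref{eq:three_point_cond} would force $2\pi<2\pi$. Hence $\ell:=d(x_1,x_2)\in (0,\pi)$ and $\sin\ell>0$. Dividing the inequality of Lemma~\ref{lem:KS-inequality} through by $\sin\ell$ yields
\[
 \cos d\bigl(\alpha x_1\oplus(1-\alpha)x_2,\,x_3\bigr) \ge \cos d(x_1,x_3)\,\frac{\sin(\alpha\ell)}{\sin\ell} + \cos d(x_2,x_3)\,\frac{\sin((1-\alpha)\ell)}{\sin\ell}.
\]

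The key estimate I would then invoke is the concavity of $\sin$ on $[0,\pi]$: since $\sin 0 = 0$, Jensen's inequality gives $\sin(\beta\ell) \ge \beta \sin\ell$ for every $\beta\in[0,1]$. Applying this with $\beta=\alpha$ and $\beta=1-\alpha$ bounds the two sine-ratios below by $\alpha$ and $1-\alpha$ respectively. Because the hypothesis $d(x_i,x_3)\le \pi/2$ makes both $\cos d(x_1,x_3)$ and $\cos d(x_2,x_3)$ non-negative, multiplying the ratio bounds by these cosine coefficients preserves the direction of the inequalities, and summing completes the proof.

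There is no serious obstacle here: the lemma is essentially a one-line consequence of Lemma~\ref{lem:KS-inequality}. The only delicate point worth emphasising is why the hypothesis $d(x_i,x_3)\le \pi/2$ cannot be dropped: without it one of the cosine coefficients could become negative, and bounding the corresponding sine-ratio below by its linear surrogate would then reverse that summand's inequality.
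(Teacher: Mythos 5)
Your proof is correct, and since the paper only cites this lemma (to \cite[Lemma~2.3]{MR3463526}) rather than proving it, your derivation—dividing the inequality of Lemma~\ref{lem:KS-inequality} by $\sin d(x_1,x_2)$, using concavity of $\sin$ on $[0,\pi]$ to get $\sin(\beta\ell)\ge\beta\sin\ell$, and using $d(x_i,x_3)\le\pi/2$ to keep the cosine coefficients nonnegative—is exactly the standard argument given in the cited reference. The degenerate case and the exclusion of $d(x_1,x_2)=\pi$ via the perimeter condition~\eqref{eq:three_point_cond} are both handled correctly.
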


Let $X$ be a $\CAT(1)$ space and $\{x_n\}$ a sequence in $X$. 
The asymptotic center $\AC\bigl(\{x_n\}\bigr)$ 
of $\{x_n\}$ is defined by 
\begin{align*}
 \AC\bigl(\{x_n\}\bigr) 
 =\ck{z\in X: \limsup_{n\to \infty} d(z, x_n) 
 =\inf_{y\in X}\limsup_{n\to \infty} d(y, x_n)}. 
\end{align*}
The sequence $\{x_n\}$ is said to be $\Delta$-convergent 
to an element $p\in X$ if 
\begin{align*}
 \AC\bigl(\{x_{n_i}\}\bigr)=\{p\} 
\end{align*}
for each subsequence $\{x_{n_i}\}$ of $\{x_n\}$. 
In this case, the point $p$ is called the $\Delta$-limit 
of $\{x_n\}$. 
If $\{x_n\}$ is $\Delta$-convergent to $p\in X$,  
then it is bounded and 
each subsequence of $\{x_n\}$ is $\Delta$-convergent to $p$. 
For a sequence $\{x_n\}$ in $X$, we denote by 
$\omega_{\Delta}\bigl(\{x_n\}\bigr)$ the set of all 
points $q\in X$ such that there exists a subsequence of 
$\{x_n\}$ which is $\Delta$-convergent to $q$. 
It is known~\cite[Proposition~4.1 and Corollary~4.4]{MR2508878} 
that if $X$ is a complete $\CAT(1)$ space 
and $\{x_n\}$ is a spherically bounded sequence in $X$, 
that is, it satisfies~\eqref{eq:spherical-bdd},  
then $\AC\bigl(\{x_n\}\bigr)$ is a singleton and 
$\{x_n\}$ has a $\Delta$-convergent subsequence. 
See~\cite{MR2508878, MR2416076} 
for more details on $\Delta$-convergence. 
We know the following. 

\begin{lemma}[{\cite[Proposition~3.1]{MR3213144}}]
 \label{lem:Delta_conv}
 Let $X$ be a complete $\CAT(1)$ space 
 and $\{x_n\}$ a spherically bounded sequence in $X$. 
 If $\{d(z, x_n)\}$ is convergent 
 for all $z\in \omega_{\Delta} \bigl(\{x_n\}\bigr)$, 
 then $\{x_n\}$ is $\Delta$-convergent. 
\end{lemma}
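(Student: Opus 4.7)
The plan is a two-step argument: first show that $\omega_{\Delta}\bigl(\{x_n\}\bigr)$ reduces to a single point $p$, then upgrade this to $\Delta$-convergence of the whole sequence to $p$. The hypothesis that $d(z,x_n)$ converges whenever $z\in \omega_{\Delta}\bigl(\{x_n\}\bigr)$ is the lever that allows comparison of asymptotic radii across different subsequences; the uniqueness of asymptotic centers in the spherically bounded setting, already recorded in Section~\ref{sec:pre}, is the other main ingredient.

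For the first step, I would suppose toward contradiction that $p,q\in \omega_{\Delta}\bigl(\{x_n\}\bigr)$ with $p\ne q$ and set $r_p=\lim_n d(p,x_n)$ and $r_q=\lim_n d(q,x_n)$. Pick a subsequence $\{x_{n_k}\}$ that is $\Delta$-convergent to $p$, so $\AC\bigl(\{x_{n_k}\}\bigr)=\{p\}$. Since $\limsup_k d(p,x_{n_k})=r_p$ and $\limsup_k d(q,x_{n_k})=r_q$, and $p$ is the \emph{unique} minimizer of $y\mapsto \limsup_k d(y,x_{n_k})$, we obtain $r_p<r_q$. Interchanging the roles of $p$ and $q$ gives the reverse strict inequality, a contradiction, so $\omega_{\Delta}\bigl(\{x_n\}\bigr)=\{p\}$.

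For the second step, let $\{x_{n_i}\}$ be an arbitrary subsequence and let $\AC\bigl(\{x_{n_i}\}\bigr)=\{q\}$; the goal is $q=p$. Extract a sub-subsequence $\{x_{n_{i_j}}\}$ that is $\Delta$-convergent; by the first step its $\Delta$-limit must be $p$, so $\AC\bigl(\{x_{n_{i_j}}\}\bigr)=\{p\}$. Then the sandwich
\begin{align*}
 r_p=\lim_j d(p,x_{n_{i_j}})
 \le \limsup_j d(q,x_{n_{i_j}})
 \le \limsup_i d(q,x_{n_i})
 \le \limsup_i d(p,x_{n_i})=r_p
\end{align*}
forces all four quantities to equal $r_p$: the first inequality uses that $p$ minimizes for $\{x_{n_{i_j}}\}$, the last uses that $q$ minimizes for $\{x_{n_i}\}$. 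In particular $p$ also attains the asymptotic center of $\{x_{n_i}\}$, so by uniqueness $q=p$, and since $\{x_{n_i}\}$ was arbitrary, $\{x_n\}$ is $\Delta$-convergent to $p$.

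The main obstacle is precisely this second step: the asymptotic center $q$ of a general subsequence need not itself lie in $\omega_{\Delta}\bigl(\{x_n\}\bigr)$, so the convergence hypothesis at $q$ is not directly available; the comparison must be routed through a $\Delta$-convergent sub-subsequence, which is what makes the $\limsup$ sandwich close up.
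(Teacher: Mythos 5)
Your argument is correct; note that the paper itself does not prove this lemma but cites it from \cite[Proposition~3.1]{MR3213144}, and your two-step proof (uniqueness of $\omega_{\Delta}\bigl(\{x_n\}\bigr)$ via the convergence hypothesis and the uniqueness of asymptotic centers of spherically bounded subsequences, followed by the $\limsup$ sandwich routed through a $\Delta$-convergent sub-subsequence) is essentially the standard proof of that cited result. The only point worth making explicit is that $\omega_{\Delta}\bigl(\{x_n\}\bigr)$ is nonempty and that every subsequence is again spherically bounded, both of which follow from the facts about spherically bounded sequences recorded in Section~\ref{sec:pre}.
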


Let $X$ be an admissible $\CAT(1)$ space 
and $f$ a function of $X$ into $\opclitvl{-\infty}{\infty}$. 
The function $f$ is said to be convex if 
\begin{align*}
 f\bigl(\alpha x \oplus (1-\alpha) y\bigr) 
 \leq \alpha f(x) + (1-\alpha) f(y)
\end{align*}
for all $x,y\in X$ and $\alpha \in \openitvl{0}{1}$. 
It is also said to be $\Delta$-lower semicontinuous if 
\begin{align*}
 f(p)\leq \liminf_{n\to \infty} f(x_{n})
\end{align*}
whenever $\{x_n\}$ is a sequence in $X$ 
which is $\Delta$-convergent to $p\in X$. 
We denote by $\Argmin_X f$ or 
$\Argmin_{y\in X}f(y)$ the set of all minimizers of $f$. 
A function $g$ of $X$ into $\clopitvl{-\infty}{\infty}$ 
is said to be concave if $-g$ is convex. 
We denote by $\Argmax_{X} g$ the set of all 
maximizers of $g$. 
See~\cite{MR1113394, MR3523548} 
on some examples of convex functions 
in $\CAT(1)$ spaces. 
We know the following. 
\begin{lemma}[{\cite[Lemma~3.1]{MR3463526}}]
 \label{lem:Delta-lsc-CAT1}
 Let $X$ be an admissible complete $\CAT(1)$ space and 
 $f$ a proper lower semicontinuous convex function of 
 $X$ into $\opclitvl{-\infty}{\infty}$. 
 Then $f$ is $\Delta$-lower semicontinuous. 
\end{lemma}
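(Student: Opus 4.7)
Suppose $\{x_n\}$ is $\Delta$-convergent to $p \in X$; the goal is to show $f(p) \leq L := \liminf_n f(x_n)$, the case $L = +\infty$ being trivial. For any $\alpha > L$ pass to a subsequence $\{x_{n_k}\}$ contained in the sublevel set $C_\alpha := \{y \in X : f(y) \leq \alpha\}$, which is nonempty, closed (by lower semicontinuity of $f$), and convex (by convexity of $f$). Since every subsequence of a $\Delta$-convergent sequence retains the same $\Delta$-limit, $\{x_{n_k}\}$ is still $\Delta$-convergent to $p$. Thus the lemma will follow from the stronger auxiliary statement: \emph{every nonempty closed convex subset of an admissible complete $\CAT(1)$ space is $\Delta$-closed}; given this, $p \in C_\alpha$ for every $\alpha > L$, and letting $\alpha \downarrow L$ gives $f(p) \leq L$.

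To prove the auxiliary claim, let $C \subseteq X$ be nonempty closed convex and let $\{y_n\} \subseteq C$ be $\Delta$-convergent to $q \in X$, so that $q$ is the unique asymptotic center of $\{y_n\}$. Suppose toward a contradiction that $q \notin C$, and let $q' \in C$ be the metric projection of $q$ onto $C$, which exists and is unique in our setting (e.g.\ as the value at $q$ of the resolvent~\eqref{eq:resolvent-CAT1} of the indicator function of $C$). By the minimising property of $q'$, the Alexandrov angle at $q'$ between $\overrightarrow{q'q}$ and $\overrightarrow{q'y}$ is at least $\pi/2$ for every $y \in C$; taking a comparison triangle in $\S^2$ for $\{q, q', y\}$ (admissible by admissibility of $X$) and invoking the spherical law of cosines at $\bar{q}'$ yields the spherical Pythagoras-type inequality
\begin{align*}
 \cos d(q, y) \leq \cos d(q, q')\cos d(q', y) \quad \text{for every } y \in C.
\end{align*}
Applying this with $y = y_n$ and dividing by $\cos d(q, q') \in (0, 1)$ (valid since $d(q, q') < \pi/2$ by admissibility), one obtains
\begin{align*}
 \liminf_n \cos d(q', y_n) \;\geq\; \frac{\liminf_n \cos d(q, y_n)}{\cos d(q, q')} \;>\; \liminf_n \cos d(q, y_n) \;>\; 0,
\end{align*}
where the final positivity uses that $\Delta$-convergence implies spherical boundedness, so $\limsup_n d(q, y_n) < \pi/2$. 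By the strict monotonicity and continuity of $\cos$ on $[0, \pi/2)$, this reads as $\limsup_n d(q', y_n) < \limsup_n d(q, y_n)$, contradicting the fact that $q$ is the asymptotic center of $\{y_n\}$.

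The main obstacle is the spherical projection inequality above: one must carefully verify both the existence and uniqueness of $P_C q$ in an admissible complete $\CAT(1)$ space and the Alexandrov-angle lower bound at $q'$, exploiting admissibility throughout to keep every relevant distance inside $[0, \pi/2)$ and every cosine strictly positive. Once the projection inequality is in hand, the remainder is a routine combination of lower semicontinuity, convexity, and the defining property of the asymptotic center.
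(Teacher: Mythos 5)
Your argument is correct, but note that the paper itself gives no proof of this lemma: it is imported verbatim as \cite[Lemma~3.1]{MR3463526}, so there is nothing in the present source to compare against line by line. Your route --- reduce $\Delta$-lower semicontinuity to $\Delta$-closedness of the sublevel sets $C_\alpha$, which are nonempty, closed and convex, and then prove $\Delta$-closedness of closed convex sets via the metric projection --- is the standard one (it mirrors the $\CAT(0)$ argument and, in spirit, the cited source). The reduction step is airtight, and the contradiction at the end is handled correctly, including the use of Lemma~\ref{lem:limsup-liminf} to pass between $\liminf_n \cos d(\cdot,y_n)$ and $\cos(\limsup_n d(\cdot,y_n))$ and the observation that $\Delta$-convergence forces $\limsup_n d(q,y_n)<\pi/2$. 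The two ingredients you flag as the ``main obstacle'' are indeed the only nontrivial inputs, and both are available with the paper's own toolkit: existence and uniqueness of $P_C q$ follow by taking $f=\iota_C$ in \cite[Theorem~4.2]{MR3463526}, exactly as you suggest; and the projection inequality $\cos d(q,y)\leq\cos d(q,q')\cos d(q',y)$ for $y\in C$ can be obtained without any appeal to Alexandrov angles or the angle-comparison theorem by a first-variation argument identical to the one in the proof of Lemma~\ref{lem:res-fund}: apply Lemma~\ref{lem:KS-inequality} to $z_t=ty\oplus(1-t)q'$, use $d(q,z_t)\geq d(q,q')$ (since $z_t\in C$) together with admissibility to replace $\cos d(q,z_t)$ by $\cos d(q,q')$, divide by $t$ and let $t\downarrow 0$. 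Substituting that derivation for the angle argument would make your proof fully self-contained relative to this paper; as written, the Alexandrov-angle step is correct but leans on comparison facts the paper never states.
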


It is clear that if 
$A$ is a nonempty bounded subset of $\R$, 
$I$ is a closed subset of $\R$ which contains $A$, 
and $f$ is a continuous and nondecreasing real function on $I$, 
then $f(\sup A) =\sup f(A)$ and $f(\inf A) =\inf f(A)$. 
This implies the following. 

\begin{lemma}\label{lem:limsup-liminf}
 Let $I$ be a nonempty closed subset of $\R$, 
 $\{t_n\}$ a bounded sequence in $I$, 
 and $f$ a continuous real function on $I$. 
 Then the following hold.   
 \begin{enumerate}
  \item[(i)] If $f$ is nondecreasing, 
    then $f(\limsup_n t_n)= \limsup_n f(t_n)$; 
  \item[(ii)] if $f$ is nonincreasing, 
    then $f(\limsup_n t_n)= \liminf_n f(t_n)$.  
 \end{enumerate}
\end{lemma}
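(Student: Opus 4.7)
The plan is to reduce the lemma to the parenthetical observation that the author states just above it, applied to the tail sets $A_n=\{t_k:k\geq n\}$ of the sequence. Since $\{t_n\}$ is bounded, each $A_n$ is a nonempty bounded subset of $I$, and the numbers $s_n=\sup A_n$ form a nonincreasing sequence with $s_n\downarrow \limsup_n t_n$. Because $I$ is closed in $\R$, every $s_n$ lies in $I$ (as a limit of points of $A_n\subset I$), and so does $\limsup_n t_n$; hence $f$ is defined and continuous at all of these points, and the observation is applicable.

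For part (i), I would apply the observation with $A=A_n$: since $f$ is continuous and nondecreasing, $f(s_n)=\sup f(A_n)=\sup_{k\geq n} f(t_k)$. Continuity of $f$ at $\limsup_n t_n$ gives $f(s_n)\to f(\limsup_n t_n)$; on the other hand, the right-hand side is a nonincreasing sequence of real numbers whose limit is, by definition, $\limsup_n f(t_n)$. Boundedness of $\{f(t_n)\}$ (needed to ensure the limit superior is finite) is automatic because $\{t_n\}$ has compact closure in $I$, on which $f$ is bounded. Comparing the two limits yields the claim $f(\limsup_n t_n)=\limsup_n f(t_n)$.

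Part (ii) is then a one-line reduction: if $f$ is continuous and nonincreasing, then $-f$ is continuous and nondecreasing, so applying (i) to $-f$ gives
\[
-f(\limsup_n t_n)=\limsup_n\bigl(-f(t_n)\bigr)=-\liminf_n f(t_n),
\]
whence $f(\limsup_n t_n)=\liminf_n f(t_n)$.

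No substantial obstacle is expected here; the whole argument is a bookkeeping exercise on top of the stated observation. The only steps that demand any real attention are verifying that the tail suprema $s_n$ and the value $\limsup_n t_n$ all lie in $I$, so that $f$ can be evaluated at them and continuity invoked, which is precisely the role of the closedness assumption on $I$.
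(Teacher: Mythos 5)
Your proof is correct and follows exactly the route the paper intends: the lemma is stated as an immediate consequence of the preceding observation that $f(\sup A)=\sup f(A)$ for continuous nondecreasing $f$ on a closed set containing $A$, and your application of that observation to the tail sets $A_n=\{t_k:k\geq n\}$, followed by a limit using continuity and the reduction of (ii) to (i) via $-f$, is the natural way to fill in the details the paper omits. The points you single out as needing care (that $\sup A_n$ and $\limsup_n t_n$ lie in $I$ by closedness) are indeed the only nontrivial checks.
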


\section{Fundamental properties of resolvents in $\CAT(1)$ spaces}\label{sec:fund}

Let $X$ be an admissible complete $\CAT(1)$ space 
and $f$ a proper lower semicontinuous convex function 
of $X$ into $\opclitvl{-\infty}{\infty}$. 
It is known~\cite[Theorem~4.2]{MR3463526} that 
for each $x\in X$, there exists a unique 
$\hat{x}\in X$ such that 
\begin{align*}
 f(\hat{x}) +\tan d(\hat{x}, x)\sin d(\hat{x}, x) 
 =\inf _{y\in X} 
 \bigl\{f(y)+\tan d(y, x)\sin d(y, x)\bigr\}. 
\end{align*}
The resolvent $R_{f}$ of $f$ is defined by 
$R_f x =\hat{x}$ for all $x\in X$. 
In other words, $R_{f}$ is given by~\eqref{eq:resolvent-CAT1} 
for all $x\in X$. 
It is known~\cite[Theorems~4.2 and~4.6]{MR3463526} 
that $R_{f}$ is a well defined single valued mapping of $X$ into itself  
satisfying~\eqref{eq:FP-Min} and~\eqref{eq:SFSN-resolvent}. 

Using some techniques developed 
in the proof of~\cite[Theorem~4.6]{MR3463526}, 
we show the following fundamental result. 
The inequality~\eqref{eq:lem:res-fund-b} 
is a generalization of~\eqref{eq:SFSN-resolvent} 
and also a counterpart of~\cite[Lemma~3.1]{MR2780284} 
in the $\CAT(1)$ space setting. 

\begin{lemma}\label{lem:res-fund}
 Let $X$ be an admissible complete $\CAT(1)$ space, 
 $f$ a proper lower semicontinuous convex 
 function of $X$ into $\opclitvl{-\infty}{\infty}$, 
 $R_{\eta}$ the resolvent of $\eta f$ 
 for all $\eta >0$, 
 and $C_{\eta, z}$ the real number given by 
 $C_{\eta, z}=\cos d(R_{\eta}z,z)$ 
 for all $\eta >0$ and $z\in X$. 
 If $\lambda, \mu>0$ and $x,y\in X$, then the inequalities 
 \begin{align}
  \begin{split}\label{eq:lem:res-fund-a}
   &\sk{\frac{1}{C_{\lambda,x}^2}+1}
      d(R_{\lambda}x,R_{\mu}y)\bigl(
      C_{\lambda,x}\cos d(R_{\lambda}x,R_{\mu}y)
      -\cos d(R_{\mu}y,x)\bigr) \\
   &\geq 
     \lambda \bigl(f(R_{\lambda}x) - f(R_{\mu}y)\bigr) \sin
      d(R_{\lambda}x,R_{\mu}y)
  \end{split}
 \end{align}
 and
 \begin{align}
  \begin{split}\label{eq:lem:res-fund-b}
  &\Bigl(
    \lambda C_{\lambda, x}^2(1+C_{\mu,y}^2)C_{\mu,y}
     + 
    \mu C_{\mu, y}^2(1+C_{\lambda,x}^2)C_{\lambda,x}
    \Bigr)
    \cos d(R_{\lambda}x,R_{\mu}y) \\
  &\geq 
     \lambda C_{\lambda, x}^2(1+C_{\mu,y}^2)
     \cos d(R_{\lambda}x,y) +
    \mu C_{\mu, y}^2(1+C_{\lambda,x}^2)
    \cos d(R_{\mu}y,x)
  \end{split}
 \end{align}
 hold. 
\end{lemma}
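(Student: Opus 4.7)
The plan is to establish \eqref{eq:lem:res-fund-a} by a first-variation argument for $R_\lambda$ along the geodesic from $R_\lambda x$ to $R_\mu y$, and then to deduce \eqref{eq:lem:res-fund-b} by symmetrizing \eqref{eq:lem:res-fund-a} in the pairs $(\lambda, x)$ and $(\mu, y)$. Throughout, write $u=R_\lambda x$, $v=R_\mu y$, $a=d(u,x)$, $b=d(v,x)$, $D=d(u,v)$, $A=C_{\lambda,x}=\cos a$, $B=C_{\mu,y}$, and $g(t)=\tan t\sin t$; the identity $g(t)=\sec t-\cos t$ will be used repeatedly.

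For \eqref{eq:lem:res-fund-a}, fix $\alpha\in(0,1]$ and set $z_\alpha=(1-\alpha)u\oplus\alpha v$. The minimization property defining $u=R_\lambda x$, combined with the convexity of $f$, yields
\begin{align*}
 \lambda\alpha\bigl(f(u)-f(v)\bigr)\leq g\bigl(d(z_\alpha,x)\bigr)-g(a),
\end{align*}
and the identity $g(c)-g(a)=(\cos a-\cos c)(1+\cos a\cos c)/(\cos a\cos c)$ recasts this bound in terms of $\cos d(z_\alpha,x)$. Admissibility gives $d(u,v)+d(v,x)+d(x,u)<3\pi/2<2\pi$, so Lemma~\ref{lem:KS-inequality} applies to the triangle with vertices $u,v,x$ and produces
\begin{align*}
 \cos d(z_\alpha,x)\sin D\geq \cos a\sin\bigl((1-\alpha)D\bigr)+\cos b\sin(\alpha D).
\end{align*}
Expanding $\sin((1-\alpha)D)=\sin D\cos(\alpha D)-\cos D\sin(\alpha D)$, dividing by $\sin D$, and sending $\alpha\downarrow 0$ gives the one-sided bound
\begin{align*}
 \limsup_{\alpha\downarrow 0}\frac{\cos a-\cos d(z_\alpha,x)}{\alpha}\leq \frac{D(\cos a\cos D-\cos b)}{\sin D}.
\end{align*}
Inserting this into the rewritten minimization inequality, while the positive factor $(1+\cos a\cos d(z_\alpha,x))/(\cos a\cos d(z_\alpha,x))$ converges to $(1+A^2)/A^2=1/A^2+1$, and finally multiplying through by $\sin D$, yields \eqref{eq:lem:res-fund-a}. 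The degenerate cases $D=0$ and $a=0$ (the latter forcing $u=x\in\Argmin_X f$, so $f(u)\leq f(v)$) reduce \eqref{eq:lem:res-fund-a} to $0\geq 0$ or a trivial comparison, and therefore do not require the limit argument.

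For \eqref{eq:lem:res-fund-b}, swap the roles of $(\lambda,x)$ and $(\mu,y)$ in \eqref{eq:lem:res-fund-a} to obtain the companion inequality
\begin{align*}
 (1/B^2+1)D\bigl(B\cos D-\cos d(u,y)\bigr)\geq \mu\bigl(f(v)-f(u)\bigr)\sin D.
\end{align*}
Multiplying \eqref{eq:lem:res-fund-a} by $\mu$ and this companion by $\lambda$ and adding cancels the $f(u)-f(v)$ terms; dividing by $D$ (taking $D=0$ as trivial), multiplying through by $A^2B^2$, and rearranging yields \eqref{eq:lem:res-fund-b}. The principal difficulty is the limit step in \eqref{eq:lem:res-fund-a}: Lemma~\ref{lem:KS-inequality} furnishes only a one-sided estimate on $\cos d(z_\alpha,x)$, so the sign of the divided difference must be tracked carefully, and the positivity of the factor $(1+\cos a\cos c)/(\cos a\cos c)$ in the admissible regime is what keeps the direction of the inequality consistent throughout.
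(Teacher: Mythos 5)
Your proposal is correct and follows essentially the same route as the paper's proof: the first inequality is obtained exactly as in the paper by combining the minimizing property of $R_{\lambda}$, the convexity of $f$, the identity $\tan t\sin t=\sec t-\cos t$, and Lemma~\ref{lem:KS-inequality} applied to the geodesic from $R_{\lambda}x$ to $R_{\mu}y$, then passing to the limit in the divided difference; the second follows by the same symmetrization and addition. The only differences are cosmetic (you expand $\sin((1-\alpha)D)$ by the addition formula where the paper uses a sum-to-product identity, and you phrase the limit as a $\limsup$ bound), so no further comment is needed.
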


\begin{proof}
 Let $\lambda, \mu >0$ and $x, y\in X$ be given. 
 Set $D=d(R_{\lambda}x, R_{\mu}y)$ and 
 \begin{align*}
  z_t=tR_{\mu}y\oplus (1-t)R_{\lambda}x
 \end{align*}
 for all $t\in \openitvl{0}{1}$. 
 By the definition of $R_{\lambda}$ and the convexity of $f$, 
 we have 
 \begin{align*}
  \begin{split}
   &\lambda f(R_{\lambda}x)+\tan d(R_{\lambda}x,x)\sin d(R_{\lambda}x,x) \\
   &\quad \leq \lambda f(z_t)+\tan d(z_t,x)\sin d(z_t,x) \\
   &\quad \leq t\lambda f(R_{\mu}y)
    +(1-t)\lambda f(R_{\lambda}x)+\tan d(z_t,x)\sin d(z_t,x)
  \end{split}
 \end{align*}
 and hence we have 
 \begin{align}
  \begin{split}\label{eq:lem:res-fund-c}
   & t\lambda\bigl( f(R_{\lambda}x)-f(R_{\mu}y) \bigr) \\
   & \leq \tan d(z_t, x) \sin d(z_t, x) - \tan d(R_{\lambda}x, x) \sin
   d(R_{\lambda}x, x) \\
  & = \sk{\frac{1}{\cos d(z_t, x)\cos d(R_{\lambda}x,x)} +1} 
 \bigl( \cos d(R_{\lambda}x, x) - \cos d(z_t, x) \bigr). 
  \end{split}
 \end{align} 
 On the other hand, Lemma~\ref{lem:KS-inequality} implies that 
 \begin{align}
  \begin{split}\label{eq:lem:res-fund-d}
  &\cos d(z_t, x) \sin d(R_{\mu}y, R_{\lambda}x) \\
  &\geq \cos d(R_{\mu}y,x)\sin \bigl(td(R_{\mu}y, R_{\lambda}x)\bigr) 
 +\cos d(R_{\lambda}x,x)\sin \bigl((1-t)d(R_{\mu}y, R_{\lambda}x)\bigr). 
  \end{split}
 \end{align}
 Using~\eqref{eq:lem:res-fund-c} 
 and~\eqref{eq:lem:res-fund-d}, we have  
 \begin{align*}
  \begin{split}
   &t\lambda\bigl( f(R_{\lambda}x)-f(R_{\mu}y) \bigr) \sin D \\
   &\leq \sk{\frac{1}{\cos d(z_t, x)\cos d(R_{\lambda}x,x)} +1} \\
   & \quad \times \Bigl[\cos d(R_{\lambda}x, x)
  \Bigl(\sin D - \sin \bigl((1-t)D \bigr)\Bigr) - \cos d(R_{\mu}y, x) \sin (tD) \Bigr] \\
   &= \sk{\frac{1}{\cos d(z_t, x)\cos d(R_{\lambda}x,x)} +1} \cdot 2\sin
   \sk{\frac{t}{2}D} \\
   & \quad \times \Biggl[ \cos d(R_{\lambda}x, x) \cos \left(\sk{1-\frac{t}{2}}D \right) 
  - \cos d(R_{\mu}y, x) \cos \sk{\frac{t}{2}D}\Biggr] 
  \end{split}
 \end{align*}
 and hence 
 \begin{align*}
  \begin{split}
   &\lambda\bigl( f(R_{\lambda}x)-f(R_{\mu}y) \bigr) \sin D \\
   &\leq \sk{\frac{1}{\cos d(z_t, x)\cos d(R_{\lambda}x,x)} +1} \cdot \frac{2}{t}\sin
   \sk{\frac{t}{2}D} \\
   & \quad \times \Biggl[ \cos d(R_{\lambda}x, x) \cos \left(\sk{1-\frac{t}{2}}D \right) 
  - \cos d(R_{\mu}y, x) \cos \sk{\frac{t}{2}D}\Biggr].  
  \end{split}
 \end{align*}
 Letting $t\downarrow 0$, we obtain 
 \begin{align*}
   \lambda\bigl( f(R_{\lambda}x)-f(R_{\mu}y) \bigr) \sin D 
   \leq \sk{\frac{1}{C_{\lambda, x}^2} +1} D
   \bigl(C_{\lambda, x} \cos D- \cos d(R_{\mu}y, x) \bigr).  
 \end{align*}
 Thus~\eqref{eq:lem:res-fund-a} holds. 

 If $D>0$, 
 then~\eqref{eq:lem:res-fund-a} implies that 
 \begin{align}
  \begin{split}\label{eq:lem:res-fund-e}
   &\mu C_{\mu, y}^2\bigl(1+C_{\lambda,x}^2\bigr)
      \bigl(
      C_{\lambda,x}\cos D
      -\cos d(R_{\mu}y,x)\bigr) \\
   &\geq 
     \frac{\lambda \mu C_{\lambda, x}^2C_{\mu, y}^2 }{D}
    \bigl(f(R_{\lambda}x) - f(R_{\mu}y)\bigr) \sin D
  \end{split}
 \end{align}
 and 
 \begin{align}
  \begin{split}\label{eq:lem:res-fund-f}
   &\lambda C_{\lambda, x}^2\bigl(1+C_{\mu,y}^2\bigr)
      \bigl(
      C_{\mu,y}\cos D
      -\cos d(R_{\lambda}x,y)\bigr) \\
   &\geq 
     \frac{\lambda \mu C_{\lambda, x}^2C_{\mu, y}^2 }{D}
    \bigl(f(R_{\mu}y) - f(R_{\lambda}x)\bigr) \sin D. 
  \end{split}
 \end{align}
 Adding~\eqref{eq:lem:res-fund-e} and~\eqref{eq:lem:res-fund-f}, 
 we obtain~\eqref{eq:lem:res-fund-b}.  
 It is obvious that the equality in~\eqref{eq:lem:res-fund-b} 
 holds in the case when $D=0$. 
\end{proof}

As a direct consequence of Lemma~\ref{lem:res-fund}, 
we obtain the following. 

\begin{corollary}\label{cor:res-fund}
 Let $X$, $f$, $\{R_{\eta}\}$, and  
 $\{C_{\eta, z}\}$ be the same as in Lemma~\ref{lem:res-fund}. 
 If $\lambda >0$, $x\in X$, and $y\in \Argmin_X f$, 
 then the inequalities 
 \begin{align}
 \begin{split}\label{eq:cor:res-fund-a}
  \frac{\pi}{2}\sk{\frac{1}{C_{\lambda, x}^2}+1} 
     \bigl(C_{\lambda, x} \cos d(y, R_{\lambda}x) - \cos d(y, x)\bigr) 
  \geq \lambda 
     \bigl(
      f(R_{\lambda} x) - f(y) 
     \bigr)   
 \end{split}
 \end{align}
 and
  \begin{align}\label{eq:cor:res-fund-b}
  C_{\lambda, x} \cos d(y,R_{\lambda}x) \geq \cos d(y,x)
 \end{align}
 hold. 
\end{corollary}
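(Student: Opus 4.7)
The plan is to derive both inequalities as direct specializations of Lemma~\ref{lem:res-fund}, taking the second argument to be the minimizer $y$. The key observation is that $y \in \Argmin_X f = \Fix(R_\mu)$ for every $\mu>0$ by \eqref{eq:FP-Min}, so $R_\mu y = y$ and $C_{\mu, y} = \cos d(R_\mu y, y) = \cos 0 = 1$. I would fix an arbitrary $\mu > 0$ (its value will cancel) and substitute these identities into \eqref{eq:lem:res-fund-a} and \eqref{eq:lem:res-fund-b}.

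For \eqref{eq:cor:res-fund-b}, I would substitute into \eqref{eq:lem:res-fund-b}: the LHS becomes
$\bigl(2\lambda C_{\lambda,x}^2 + \mu(1+C_{\lambda,x}^2)C_{\lambda,x}\bigr)\cos d(R_\lambda x, y)$
and the RHS becomes
$2\lambda C_{\lambda,x}^2\cos d(R_\lambda x, y) + \mu(1+C_{\lambda,x}^2)\cos d(y, x)$.
The terms involving $2\lambda C_{\lambda,x}^2\cos d(R_\lambda x, y)$ cancel, and dividing through by $\mu(1+C_{\lambda,x}^2) > 0$ yields \eqref{eq:cor:res-fund-b} immediately.

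For \eqref{eq:cor:res-fund-a}, substituting into \eqref{eq:lem:res-fund-a} gives
\begin{align*}
\sk{\frac{1}{C_{\lambda,x}^2}+1}\, d(R_\lambda x, y)\bigl(C_{\lambda,x}\cos d(R_\lambda x, y) - \cos d(y, x)\bigr) \geq \lambda\bigl(f(R_\lambda x)-f(y)\bigr)\sin d(R_\lambda x, y).
\end{align*}
If $d(R_\lambda x, y) = 0$, both sides of \eqref{eq:cor:res-fund-a} vanish (since then $R_\lambda x = y$ gives $f(R_\lambda x) = f(y)$ and $C_{\lambda,x} = \cos d(y,x)$), and the inequality is trivially an equality. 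Otherwise, admissibility of $X$ gives $0 < d(R_\lambda x, y) < \pi/2$, so $\sin d(R_\lambda x, y) > 0$; dividing the displayed inequality by it leaves the ratio $d(R_\lambda x, y)/\sin d(R_\lambda x, y)$ on the left, which is bounded above by $\pi/2$ on $\openitvl{0}{\pi/2}$ (since $t \mapsto t/\sin t$ is increasing on this interval with limiting value $\pi/2$ at $t = \pi/2$). Combining this bound with the nonnegativity of $C_{\lambda,x}\cos d(R_\lambda x, y) - \cos d(y,x)$ (which is exactly \eqref{eq:cor:res-fund-b}, already established) yields \eqref{eq:cor:res-fund-a}.

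No step presents a real obstacle: both inequalities reduce to bookkeeping once the fixed-point identity $R_\mu y = y$ is invoked. The only delicate point is the dependence on $\sin d / d$ in \eqref{eq:cor:res-fund-a}, which is handled by the elementary monotonicity of $t/\sin t$ on $\openitvl{0}{\pi/2}$ together with the admissibility hypothesis ensuring $d(R_\lambda x, y) < \pi/2$; this is where the factor $\pi/2$ in the statement comes from.
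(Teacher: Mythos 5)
Your proof is correct, and for \eqref{eq:cor:res-fund-a} it is essentially the paper's argument: substitute $R_{\mu}y=y$ and $C_{\mu,y}=1$ (via \eqref{eq:FP-Min}) into \eqref{eq:lem:res-fund-a}, treat $d(y,R_{\lambda}x)=0$ separately, and absorb the ratio of $d(y,R_{\lambda}x)$ to $\sin d(y,R_{\lambda}x)$ into the constant $\pi/2$. The one genuine difference is where that elementary estimate is applied and, consequently, the order of deduction. The paper uses $\sin t\geq 2t/\pi$ on the right-hand side, which is legitimate because $f(R_{\lambda}x)-f(y)\geq 0$ for a minimizer $y$; it therefore gets \eqref{eq:cor:res-fund-a} first and then reads \eqref{eq:cor:res-fund-b} off from it (the right-hand side of \eqref{eq:cor:res-fund-a} is nonnegative), never invoking \eqref{eq:lem:res-fund-b} at all. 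You instead apply the equivalent bound $t/\sin t\leq \pi/2$ on the left-hand side, which requires knowing the sign of $C_{\lambda,x}\cos d(y,R_{\lambda}x)-\cos d(y,x)$ in advance, so you must establish \eqref{eq:cor:res-fund-b} first; your derivation of it by specializing \eqref{eq:lem:res-fund-b} with $C_{\mu,y}=1$ and cancelling the common term is a clean and valid alternative (note $\mu(1+C_{\lambda,x}^2)>0$, so the division is harmless and the arbitrary $\mu$ indeed disappears). Both routes are complete; the paper's is marginally leaner in that it uses only \eqref{eq:lem:res-fund-a}, while yours has the small advantage of showing that \eqref{eq:cor:res-fund-b} does not depend on the $\sin t\geq 2t/\pi$ estimate.
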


\begin{proof} 
 Let $\lambda >0$, $x\in X$, and $y\in \Argmin_X f$ be given. 
 Since $f(R_{\lambda}x)-f(y)\geq 0$ 
 and $\sin t \geq 2t/\pi$ for all $t\in [0,\pi/2]$, 
 it follows from~\eqref{eq:FP-Min} 
 and~\eqref{eq:lem:res-fund-a} that 
 \begin{align*}
  \begin{split}
  &\sk{\frac{1}{C_{\lambda, x}^2}+1} d(y, R_{\lambda}x) 
     \bigl(C_{\lambda, x} \cos d(y, R_{\lambda}x) - \cos d(y, x)\bigr) \\
  &\quad \geq \lambda 
     \bigl(
      f(R_{\lambda} x) - f(y) 
     \bigr) \cdot \frac{2d(y, R_{\lambda}x)}{\pi}. 
  \end{split}
 \end{align*}
 This implies that~\eqref{eq:cor:res-fund-a} holds when $d(y, R_{\lambda}x)>0$. 
 Note that the equality in~\eqref{eq:cor:res-fund-a} clearly holds 
 when $d(y, R_{\lambda}x)=0$. 
 It then follows from~\eqref{eq:cor:res-fund-a} that 
 \begin{align*}
  \begin{split}
\frac{\pi}{2} \sk{\frac{1}{C_{\lambda,x}^2} +1} 
\bigl(C_{\lambda, x} \cos d(y, R_{\lambda}x) - \cos d(y, x) \bigr) \geq 0 
  \end{split}
 \end{align*} 
 and hence~\eqref{eq:cor:res-fund-b} holds. 
\end{proof}

\section{The proximal point algorithm in $\CAT(1)$ spaces}\label{sec:PPA}

We need the following maximization theorem in the proof of 
Theorem~\ref{thm:ppa-CAT1-exist}.  

\begin{theorem}\label{thm:argmax}
 Let $X$ be an admissible complete $\CAT(1)$ space, 
 $\{z_n\}$ a spherically bounded sequence in $X$, 
 $\{\beta_n\}$ a sequence of positive real numbers 
 such that $\sum_{n=1}^{\infty}\beta_n =\infty$, 
 and $g$ the real function on $X$ defined by 
 \begin{align}
  g(y) = \liminf_{n\to \infty} \frac{1}{\sum_{l=1}^{n}\beta_l} 
  \sum_{k=1}^{n} \beta_k \cos d(y, z_k)
 \end{align} 
 for all $y\in X$. 
 Then $g$ is a concave and nonexpansive function 
 of $X$ into $[0,1]$ and $\Argmax_{X} g$ is a singleton. 
\end{theorem}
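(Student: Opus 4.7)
The plan is to verify the basic structural properties of $g$ (range, nonexpansiveness, concavity) by pointwise estimates, and then to establish existence and uniqueness of the maximizer via a single midpoint inequality derived from Lemma~\ref{lem:KS-inequality-Mid}.

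For the range claim, admissibility gives $d(y,z_k)<\pi/2$, hence $\cos d(y,z_k)\in (0,1]$, so every weighted partial average lies in $(0,1]$ and $g(y)\in [0,1]$. Spherical boundedness produces some $y_0\in X$ with $\limsup_n d(y_0,z_n)<\pi/2$; by Lemma~\ref{lem:limsup-liminf}(ii), $\liminf_n\cos d(y_0,z_n)>0$, and a Ces\`aro-type estimate using $\sum\beta_n=\infty$ then yields $g(y_0)\geq \liminf_n\cos d(y_0,z_n)>0$, so $s:=\sup_X g>0$. Nonexpansiveness follows from the $1$-Lipschitz property of cosine together with the triangle inequality and the standard bound $\abs{\liminf A_n-\liminf B_n}\leq \sup_n\abs{A_n-B_n}$. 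For concavity, since $d(y_1,z_k),d(y_2,z_k)<\pi/2$, Lemma~\ref{lem:CAT1-CMS} applies pointwise, and taking the weighted average followed by $\liminf$ (using superadditivity of $\liminf$) produces the desired inequality for $g$.

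The heart of the proof is the midpoint inequality
\begin{align*}
\cos\sk{\tfrac{1}{2}d(y_1,y_2)}\, g\sk{\tfrac{1}{2}y_1\oplus\tfrac{1}{2}y_2}\geq \tfrac{1}{2}\bigl(g(y_1)+g(y_2)\bigr),
\end{align*}
obtained by applying Lemma~\ref{lem:KS-inequality-Mid} to $y_1,y_2,z_k$ (the perimeter condition~\eqref{eq:three_point_cond} holds because admissibility bounds the sum of pairwise distances by $3\pi/2<2\pi$), multiplying by $\beta_k/\sum_{l=1}^n\beta_l$, summing over $k$, and taking $\liminf$; the $k$-independent factor $\cos(d(y_1,y_2)/2)$ pulls out and superadditivity of $\liminf$ handles the right-hand side. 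Applied to two maximizers $y_1,y_2\in \Argmax_X g$, this inequality forces $s\cos(d(y_1,y_2)/2)\geq s$, and since $s>0$ this gives $y_1=y_2$, proving uniqueness. Applied to any maximizing sequence $\{y_n\}$ with $g(y_n)\to s$, the same inequality gives $\cos(d(y_n,y_m)/2)\geq (g(y_n)+g(y_m))/(2s)\to 1$ as $n,m\to\infty$, so $\{y_n\}$ is Cauchy; completeness of $X$ together with continuity of $g$ (from nonexpansiveness) then produces a limit $y^\ast\in X$ with $g(y^\ast)=s$.

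The main obstacle is the passage from the pointwise estimate of Lemma~\ref{lem:KS-inequality-Mid} to the displayed midpoint inequality for $g$: one must confirm that the constant prefactor $\cos(d(y_1,y_2)/2)$ survives the $\liminf$ on the left and that superadditivity of $\liminf$ correctly delivers $\frac{1}{2}(g(y_1)+g(y_2))$ on the right. Once this is in place, the positivity $s>0$ coming from spherical boundedness is the only additional ingredient needed to conclude both existence and uniqueness.
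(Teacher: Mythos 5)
Your proposal is correct and follows essentially the same route as the paper: pointwise estimates from Lemma~\ref{lem:CAT1-CMS} and the Lipschitz property of cosine for concavity and nonexpansiveness, the Ces\`aro bound combined with Lemma~\ref{lem:limsup-liminf} to get $\sup_X g>0$ from spherical boundedness, and the midpoint inequality from Lemma~\ref{lem:KS-inequality-Mid} to drive both the Cauchy argument for existence and the uniqueness of the maximizer. The only cosmetic differences are that the paper arranges the maximizing sequence to be monotone in $g$-value before extracting the Cauchy estimate, whereas you use $\bigl(g(y_n)+g(y_m)\bigr)/(2s)\to 1$ directly; both are valid.
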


\begin{proof}
 Set $\sigma_n =\sum_{l=1}^{n}\beta_l$ for all $n\in \N$. 
 Since $X$ is admissible, we know that 
 \begin{align*}
  \frac{1}{\sigma_n}\sum_{k=1}^{n} \beta_k \cos d(y, z_k)\in \opclitvl{0}{1}
 \end{align*}
 and hence $g(y)\in [0,1]$ for all $y\in X$.  

 We next show that $g$ is concave and nonexpansive. 
 If $y_1,y_2\in X$ and $\alpha \in \openitvl{0}{1}$, 
 then it follows from Lemma~\ref{lem:CAT1-CMS} that 
 \begin{align*}
  \cos d\bigl(\alpha y_1 \oplus (1-\alpha)y_2, z_k\bigr) 
  \geq \alpha \cos d(y_1, z_k) + (1-\alpha) \cos d(y_2, z_k)
 \end{align*}
 for all $k\in \N$. This implies that 
 \begin{align*}
  &\frac{1}{\sigma_n} \sum_{k=1}^{n}\beta_k 
   \cos d\bigl(\alpha y_1 \oplus (1-\alpha)y_2, z_k\bigr) \\
  &\geq \frac{\alpha}{\sigma_n} \sum_{k=1}^{n}\beta_k \cos d(y_1, z_k) 
     + \frac{1-\alpha}{\sigma_n} \sum_{k=1}^{n}\beta_k  \cos d(y_2, z_k)
 \end{align*}
 for all $n\in \N$. 
 Taking the lower limit, we obtain 
 \begin{align*}
  g\bigl(\alpha y_1 \oplus (1-\alpha)y_2\bigr) 
  \geq \alpha g(y_1) + (1-\alpha) g(y_2)
 \end{align*}
 and hence $g$ is concave. 
 The nonexpansiveness of $t\mapsto \cos t$ 
 and the triangle inequality imply that 
 \begin{align*}
  \cos d(y_1, z_k) \leq d(y_1, y_2) + \cos d(y_2, z_k)
 \end{align*}
 for all $k\in \N$ and hence we have 
 \begin{align*}
  g(y_1) - g(y_2) \leq d(y_1,y_2)
 \end{align*}
 Similarly, we can see that $g(y_2) - g(y_1) \leq d(y_1,y_2)$. 
 Thus $g$ is nonexpansive. 

 We next show that $\Argmax_{X} f$ is nonempty. 
 The spherical boundedness of $\{z_n\}$ implies that 
 \begin{align*}
  0\leq \inf_{y\in X} \limsup_{n\to \infty} d(y, z_n) < \frac{\pi}{2}. 
 \end{align*}
 Since $t\mapsto \cos t$ is continuous and decreasing on $[0,\pi/2]$, 
 Lemma~\ref{lem:limsup-liminf} implies that 
 \begin{align}
  \begin{split}\label{eq:thm:argmax-a}
  0 &< \cos \sk{\inf_{y\in X} \limsup_{n\to \infty} d(y, z_n)} \\
     &= \sup_{y\in X} \cos \sk{\limsup_{n\to \infty} d(y, z_n)} 
  = \sup_{y\in X} \liminf_{n\to \infty} \cos d(y, z_n).    
  \end{split}
 \end{align}

 On the other hand, we can see that 
 \begin{align}
  \begin{split}\label{eq:thm:argmax-b}
  \liminf_{n\to \infty} \cos d(y, z_n) 
  \leq   \liminf_{n\to \infty} 
  \frac{1}{\sigma_n} \sum_{k=1}^{n}\beta_k \cos d(y, z_k) 
  \end{split}
 \end{align}
 for all $y\in X$. 
 In fact, setting $\gamma_n=\cos d(y, z_n)$ for all $n\in \N$, 
 we know that, for each $\gamma < \liminf_{n}\gamma_n$, 
 there exists $n_0\in \N$ such that 
 $\gamma < \gamma _k$ for all $k\geq n_0$. 
 Thus, if $p\in \N$, then we have 
 \begin{align*}
  \frac{1}{\sigma_{n_0+p}} \sum_{k=1}^{n_0+p} \beta_k \gamma _k 
  &=\frac{1}{\sigma_{n_0+p}} \sk{
   \sum_{k=1}^{n_0}\beta_k \gamma_k 
  + \sum_{k=n_0+1}^{n_0+p}\beta_k \gamma_k} \\
  &>\frac{1}{\sigma_{n_0+p}} \sk{
   \sum_{k=1}^{n_0}\beta_k \gamma_k 
  + \sum_{k=n_0+1}^{n_0+p}\beta_k \gamma} \\
  &=\frac{1}{\sigma_{n_0+p}} 
   \sum_{k=1}^{n_0}\beta_k \gamma_k 
  + \sk{1-\frac{\sigma_{n_0}}{\sigma_{n_0+p}}}\gamma.  
 \end{align*}
 Since $\sigma_n\to \infty$ as $n\to \infty$, we have 
 \begin{align*}
  \begin{split}
    \liminf_{n\to \infty} \frac{1}{\sigma_{n}} 
    \sum_{k=1}^{n} \beta_k \gamma _k 
   &= \liminf_{p\to \infty} \frac{1}{\sigma_{n_0+p}} 
        \sum_{k=1}^{n_0+p} \beta_k \gamma _k \\
   &\geq \liminf_{p\to \infty} \sk{\frac{1}{\sigma_{n_0+p}} 
         \sum_{k=1}^{n_0}\beta_k \gamma_k 
         + \sk{1-\frac{\sigma_{n_0}}{\sigma_{n_0+p}}}\gamma} = \gamma.    
  \end{split}
 \end{align*}
 Since $\gamma < \liminf_{n}\gamma_n$ is arbitrary, 
 we know that~\eqref{eq:thm:argmax-b} holds. 

 By~\eqref{eq:thm:argmax-a} and~\eqref{eq:thm:argmax-b}, 
 we have 
 \begin{align}\label{eq:thm:argmax-c}
  0 < \sup_{y\in X}\liminf_{n\to \infty} \cos d(y, z_n) 
  \leq \sup_{y\in X} g(y)=:l. 
 \end{align}
 By the definition of $l$, there exists a sequence 
 $\{y_n\}$ in $X$ such that 
 $g(y_{n})\leq g(y_{n+1})$ for all $n\in \N$ 
 and $g(y_n)\to l$ as $n\to \infty$. 
 If $m\geq n$, 
 then Lemma~\ref{lem:KS-inequality-Mid} implies that 
 \begin{align*}
   \cos d\Bigl(\frac{1}{2}y_n \oplus \frac{1}{2}y_m, z_k\Bigr) 
      \cos \Bigl(\frac{1}{2}d(y_n,y_m)\Bigr) 
   \geq \frac{1}{2} \cos d(y_n, z_k) 
       + \frac{1}{2} \cos d(y_m, z_k) 
 \end{align*}
 for all $k\in \N$. This gives us that 
 \begin{align*}
  \begin{split}
   g\Bigl(\frac{1}{2}y_n \oplus \frac{1}{2}y_m\Bigr) 
     \cos \Bigl(\frac{1}{2}d(y_n,y_m)\Bigr) 
   \geq \frac{1}{2} g(y_n) + \frac{1}{2} g(y_m). 
  \end{split}
 \end{align*}
 Since $l=\sup g(X)$ and $g(y_n)\leq g(y_m)$, we then obtain 
 \begin{align}
  \begin{split}\label{eq:thm:argmax-d}
   l\cos \Bigl(\frac{1}{2}d(y_n,y_m)\Bigr) 
   \geq \frac{1}{2} g(y_n) + \frac{1}{2} g(y_m) \geq g(y_n).  
  \end{split}
 \end{align} 
 Noting that~\eqref{eq:thm:argmax-c} implies that $0<l\leq 1$, 
 we have 
 \begin{align}
  \begin{split}\label{eq:thm:argmax-e}
    d(y_n,y_m) \leq 2\arccos \frac{g(y_n)}{l}
  \end{split}
 \end{align} 
 whenever $m\geq n$. 
 Since $g(y_n)/l\to 1$ as $n\to \infty$, 
 the right hand side of~\eqref{eq:thm:argmax-e} 
 converges to $0$. 
 Thus $\{y_n\}$ is a Cauchy sequence in $X$.  
 Since $X$ is complete, the sequence $\{y_n\}$ converges to some $p\in X$. 
 By the continuity of $g$ and the choice of $\{y_n\}$, 
 we obtain 
 \begin{align*}
  g(p)=\lim_{n\to \infty} g(y_n)=l. 
 \end{align*}
 Thus $p$ is an element of $\Argmax_{X} g$. 
 
 We finally show that $\Argmax_{X} g$ consists of one point. 
 Suppose that $p$ and $q$ are elements of $\Argmax_{X} g$. 
 As in the proof of~\eqref{eq:thm:argmax-d}, we can see that 
 \begin{align*}
   l\cos \Bigl(\frac{1}{2}d(p, q)\Bigr) 
  \geq \frac{1}{2} g(p) + \frac{1}{2} g(q) =l.  
 \end{align*}
 Since $l>0$, we then obtain 
 $\cos \bigl(d(p, q)/2\bigr) =1$. 
 Consequently, we have $p=q$. 
\end{proof}

Now, we are ready to give the proofs of 
Theorems~\ref{thm:ppa-CAT1-exist} and~\ref{thm:ppa-CAT1-conv}.  
In these proofs, we denote by $R_{\eta}$ and $C_{\eta, z}$ 
the resolvent of $\eta f$ for all $\eta >0$ 
and the real number given by 
$C_{\eta, z}=\cos d(R_{\eta}z,z)$ for all $\eta>0$ and $z\in X$, 
respectively. 

\begin{proof}[The proof of Theorem~\ref{thm:ppa-CAT1-exist}]
 We first show the if part. 
 Suppose that $\{x_n\}$ is spherically bounded and 
 \begin{align}\label{eq:thm:ppa-CAT1-exist-a1}
  \sup_{n} d(x_{n+1},x_{n})<\frac{\pi}{2}. 
 \end{align} 
 Set 
 \begin{align*}
  \beta_n= \frac{\lambda_n C_{\lambda_n, x_n}^2}{1+C_{\lambda_n,x_n}^2} 
  \quad \textrm{and} \quad 
  \sigma_n =\sum_{k=1}^{n} \beta _k
 \end{align*} 
 for all $n\in \N$. 
 It is obvious that $\beta_n >0$ for all $n\in \N$. 
 It also follows from~\eqref{eq:thm:ppa-CAT1-exist-a1} that 
 \begin{align*}
  0 < \cos \sk{\sup_{n} d(x_{n+1},x_n)} 
     =\inf_{n} \cos d(x_{n+1},x_n) =\inf_{n} C_{\lambda_n,x_n} 
 \end{align*}
 and hence it follows from 
 \begin{align*}
  \beta_n \geq \frac{\lambda_n C_{\lambda_n, x_n}^2}{2} 
  \quad \textrm{and} \quad 
  \sum_{n=1}^{\infty}\lambda_n=\infty
 \end{align*} 
 that $\sum_{n=1}^{\infty}\beta_n=\infty$. 
 Thus Theorem~\ref{thm:argmax} ensures that 
 the real function $g$ on $X$, which is defined by 
 \begin{align*}
  g(y) = \liminf_{n\to \infty} 
  \frac{1}{\sigma_n} \sum_{k=1}^{n} \beta_k \cos d(y, x_{k+1})
 \end{align*}
 for all $y\in X$, 
 has a unique maximizer $p$ on $X$. 

 Let $\mu$ be a positive real number. 
 By~\eqref{eq:lem:res-fund-b}, we have 
 \begin{align*}
  \begin{split}
  &\Bigl(
    \lambda_k C_{\lambda_k, x_k}^2(1+C_{\mu,p}^2)
     + 
    \mu C_{\mu, p}^2(1+C_{\lambda_k,x_k}^2)
    \Bigr)
    \cos d(x_{k+1},R_{\mu}p) \\
  &\geq 
     \lambda_k C_{\lambda_k, x_k}^2(1+C_{\mu,p}^2)
     \cos d(x_{k+1},p) +
    \mu C_{\mu, p}^2(1+C_{\lambda_k,x_k}^2)
    \cos d(R_{\mu}p,x_k)
  \end{split}
 \end{align*}
 and hence 
 \begin{align}
  \begin{split}\label{eq:thm:ppa-CAT1-exist-a3}
  &\frac{\lambda_k C_{\lambda_k, x_k}^2}{1+C_{\lambda_k,x_k}^2}
    \cos d(x_{k+1},R_{\mu}p) \\
  &\geq 
     \frac{\lambda _k C_{\lambda_k, x_k}^2}{1+C_{\lambda_k,x_k}^2}
     \cos d(x_{k+1},p) 
  +
    \frac{\mu C_{\mu, p}^2}{1+C_{\mu,p}^2}
    \bigl(\cos d(R_{\mu}p,x_k) - \cos d(R_{\mu}p,x_{k+1})\bigr)
  \end{split}
 \end{align}
 for all $k\in \N$. 
 Summing up~\eqref{eq:thm:ppa-CAT1-exist-a3} 
 with respect to $k\in \{1,2,\dots , n\}$, we have 
 \begin{align*}
  \begin{split}
  &\frac{1}{\sigma_n}
  \sum_{k=1}^{n} \beta_k \cos d(x_{k+1},R_{\mu}p) \\
  &\geq 
     \frac{1}{\sigma_n}
  \sum_{k=1}^{n} \beta_k \cos d(x_{k+1},p) 
  +
    \frac{\mu C_{\mu, p}^2}{1+C_{\mu,p}^2}
    \cdot \frac{\cos d(R_{\mu}p,x_1) - \cos d(R_{\mu}p,x_{n+1})}{\sigma_n}
  \end{split}
 \end{align*} 
 for all $n\in \N$. 
 Since $\sigma_{n}\to \infty$ as $n\to \infty$, we obtain 
 \begin{align*}
  g(R_{\mu} p) 
 &=\liminf_{n\to \infty} \frac{1}{\sigma_n}
  \sum_{k=1}^{n} \beta_k \cos d(x_{k+1},R_{\mu}p) \\
 &\geq \liminf_{n\to \infty} 
\frac{1}{\sigma_n}
  \sum_{k=1}^{n} \beta_k \cos d(x_{k+1},p) =g(p). 
 \end{align*}
 Then it follows from $\Argmax_{X}g=\{p\}$ that $R_{\mu}p=p$. 
 By~\eqref{eq:FP-Min}, we know that 
 \begin{align*}
  \Fix(R_{\mu}) = \Argmin_X \mu f = \Argmin_X f
 \end{align*}
 and hence we conclude that $p$ is an element of $\Argmin_X f$.  

 We next show the only if part. 
 Suppose that $\Argmin_X f$ is nonempty and 
 let $u$ be an element of $\Argmin_X f$. 
 It follows from~\eqref{eq:cor:res-fund-b} that 
 \begin{align}\label{eq:thm:ppa-CAT1-exist-b1}
  \begin{split}
  \min\bigl\{\cos d(x_{n+1}, x_{n}), \cos d(u,x_{n+1})\bigr\}
  &\geq \cos d(x_{n+1}, x_{n}) \cos d(u,x_{n+1}) \\
  &\geq \cos d(u,x_{n}). 
  \end{split}
 \end{align}
 The admissibility of $X$ and~\eqref{eq:thm:ppa-CAT1-exist-b1} 
 imply that 
 \begin{align}\label{eq:thm:ppa-CAT1-exist-b2}
  \max\bigl\{d(x_{n+1}, x_{n}), d(u, x_{n+1})\bigr\} 
 \leq d(u, x_{n})\leq d(u, x_{1})<\frac{\pi}{2}
 \end{align}
 and hence $\{x_n\}$ is spherically bounded 
 and $\sup_{n}d(x_{n+1},x_{n}) <\pi/2$. 
\end{proof}

\begin{proof}[The proof of Theorem~\ref{thm:ppa-CAT1-conv}]
 We first show~(i).  
 Set $l=\sup_{n}d(x_{n+1},x_{n})$.  
 By Theorem~\ref{thm:ppa-CAT1-exist}, we know that 
 $\{x_n\}$ is spherically bounded and 
 $l <\pi/2$. 
 Letting 
 \begin{align*}
  K=\frac{1}{\cos^2 l} +1, 
 \end{align*}
 we have 
 \begin{align}\label{eq:thm:ppa-CAT1-conv-temp-a}
  \frac{1}{\cos ^2d(x_{n+1},x_{n})} + 1 \leq K
 \end{align}
 for all $n\in \N$. 
 Let $u$ be an element of $\Argmin_X f$. 
 By the definitions of $R_{\lambda_n}$ 
 and $\{x_n\}$, we know that 
 \begin{align}
  \begin{split}\label{eq:thm:ppa-CAT1-conv-c5}
  f(u) 
   &\leq f(x_{n+1}) \\
   &\leq f(x_{n+1}) 
   + \frac{1}{\lambda _n} \tan d(x_{n+1}, x_{n}) 
      \sin d(x_{n+1}, x_{n}) \\
   &\leq f(x_{n})  
  \end{split} 
 \end{align}
 for all $n\in \N$. 
 On the other hand, it follows from~\eqref{eq:cor:res-fund-a} that 
 \begin{align}
  \begin{split}\label{eq:thm:ppa-CAT1-conv-c1}
  &\lambda _{n}\bigl(
        f(x_{n+1}) - f(u) \bigr) \\
  &\leq \frac{\pi}{2}\sk{\frac{1}{\cos ^2d(x_{n+1},x_{n})}+1} 
     \bigl(\cos d(u, x_{n+1}) -\cos d(u, x_{n})\bigr) 
  \end{split}
 \end{align}
 for all $n\in \N$. 
 If $n\in \N$ and $k\in \{1,2,\dots, n\}$, then 
 it follows from~\eqref{eq:thm:ppa-CAT1-conv-temp-a},~\eqref{eq:thm:ppa-CAT1-conv-c5}, 
 and~\eqref{eq:thm:ppa-CAT1-conv-c1} that 
 \begin{align*}
  \begin{split}
  \lambda _{k}\bigl( f(x_{n+1}) - f(u) \bigr) 
  &\leq \lambda _{k}\bigl( f(x_{k+1}) - f(u) \bigr) \\
  &\leq \frac{K\pi}{2}\bigl(\cos d(u, x_{k+1}) -\cos d(u, x_{k})\bigr). 
  \end{split}
 \end{align*}
Hence we obtain 
 \begin{align*}
 \begin{split}
   \bigl( f(x_{n+1}) - \inf f(X) \bigr) \sum_{k=1}^{n}\lambda _k 
   &\leq \frac{K\pi}{2} 
     \bigl(\cos d(u, x_{n+1}) -\cos d(u, x_{1})\bigr) \\
   &\leq \frac{K\pi}{2} \bigl(1-\cos d(u, x_{1})\bigr). 
 \end{split}
 \end{align*}
 Letting $C=K\pi/2$, we obtain the desired inequality. 

 We finally show~(ii). 
 Since $\sum_{n=1}^{\infty}\lambda_n=\infty$, 
 it follows from~(i) that 
 \begin{align}\label{eq:thm:ppa-CAT1-conv-c7}
  \lim_{n\to \infty} f(x_n) =\inf f(X). 
 \end{align}
 We then show that $\{d(z, x_{n})\}$ is convergent 
 for all $z \in \omega_{\Delta}\bigl(\{x_n\}\bigr)$. 
 If $z$ is an element of $\omega_{\Delta}\bigl(\{x_n\}\bigr)$, 
 then we have a subsequence $\{x_{n_i}\}$ of $\{x_n\}$ 
 which is $\Delta$-convergent to $z$. 
 By Lemma~\ref{lem:Delta-lsc-CAT1} and~\eqref{eq:thm:ppa-CAT1-conv-c7}, 
 we obtain 
 \begin{align*}
  f(z)\leq \liminf_{i\to \infty} f(x_{n_i}) 
  =\lim_{n\to \infty} f(x_n) = \inf f(X) 
 \end{align*}
 and hence $z$ is an element of $\Argmin_X f$. 
 Thus $\omega_{\Delta}\bigl(\{x_n\}\bigr)$ 
 is a subset of $\Argmin_X f$. 
 It also follows from~\eqref{eq:thm:ppa-CAT1-exist-b2} that 
 $\{d(z, x_{n})\}$ is convergent. 
 Thus, Lemma~\ref{lem:Delta_conv} implies that 
 $\{x_{n}\}$ is $\Delta$-convergent to some $x_{\infty}\in X$. 
 This gives us that 
 \begin{align*}
  \{x_{\infty}\}=\omega_{\Delta}\bigl(\{x_n\}\bigr) 
 \subset \Argmin_X f. 
 \end{align*} 
 Consequently, $\{x_n\}$ is 
 $\Delta$-convergent to an element of $\Argmin_X f$. 
\end{proof}

\section{Three corollaries of Theorems~\ref{thm:ppa-CAT1-exist} and~\ref{thm:ppa-CAT1-conv}}\label{sec:cor}

Using Theorems~\ref{thm:ppa-CAT1-exist} 
and~\ref{thm:ppa-CAT1-conv}, 
we obtain the following three corollaries. 

\begin{corollary}\label{cor:conv-CAT1-single}
 Let $X$ be an admissible complete $\CAT(1)$ space, 
 $f$ a proper lower semicontinuous convex 
 function of $X$ into $\opclitvl{-\infty}{\infty}$, 
 $R_{f}$ the resolvent of $f$, 
 and $x$ an element of $X$. 
 Then the following hold. 
 \begin{enumerate}
  \item[(i)] The set $\Argmin_X f$ is nonempty if and only if 
 $\{R_{f}^nx\}$ is spherically bounded and 
  $\sup_{n}d(R_{f}^{n+1}x,R_{f}^{n}x)< \pi /2$; 
  \item[(ii)] if $\Argmin_X f$ is nonempty, then 
 $\{R_{f}^nx\}$ is $\Delta$-convergent 
 to an element of $\Argmin_X f$ 
 and there exists a positive real number $C$ such that 
 \begin{align*}
  f(R_f^{n}x) - \inf f(X) \leq 
 \frac{C}{n} \bigl(1-\cos d(u, x_1)\bigr) 
 \end{align*}
 for all $u\in \Argmin_X f$ and $n\in \N$. 
 \end{enumerate}
 \end{corollary}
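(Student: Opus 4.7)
The plan is to specialize Theorems~\ref{thm:ppa-CAT1-exist} and~\ref{thm:ppa-CAT1-conv} to the case $\lambda_n \equiv 1$ and initial datum $x_1 = x$. Since the resolvent of $\lambda_n f = f$ is simply $R_f$, the scheme~\eqref{eq:ppa-CAT1} reduces to $x_{n+1} = R_f x_n$, so an easy induction gives $x_n = R_f^{n-1} x$ for every $n \in \N$. The summability hypothesis $\sum_{n=1}^{\infty} \lambda_n = \infty$ is trivially satisfied.

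For part~(i), Theorem~\ref{thm:ppa-CAT1-exist} asserts that $\Argmin_X f$ is nonempty if and only if $\{x_n\}$ is spherically bounded and $\sup_n d(x_{n+1}, x_n) < \pi/2$. Substituting $x_n = R_f^{n-1} x$ and noting that a one-step index shift preserves both spherical boundedness and the supremum of consecutive distances, this condition is exactly the one stated in the corollary.

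For part~(ii), Theorem~\ref{thm:ppa-CAT1-conv} provides the $\Delta$-convergence of $\{x_n\} = \{R_f^{n-1} x\}$ to some element of $\Argmin_X f$, which is clearly equivalent to the $\Delta$-convergence of $\{R_f^n x\}$ to the same limit. Moreover, since $\sum_{k=1}^{n} \lambda_k = n$, clause~(i) of that theorem yields
\[
f(x_{n+1}) - \inf f(X) \leq \frac{C}{n}\bigl(1 - \cos d(u, x)\bigr)
\]
for every $u \in \Argmin_X f$ with a suitable constant $C > 0$; rewriting $x_{n+1} = R_f^n x$ gives the claimed estimate.

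No substantive obstacle is anticipated: the result is a direct specialization, and the only point that deserves attention is the bookkeeping around the one-step index offset between the algorithmic sequence $\{x_n\} = \{R_f^{n-1} x\}$ and the iterates $\{R_f^n x\}$ named in the corollary.
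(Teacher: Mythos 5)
Your proposal is correct and follows exactly the paper's own argument: specialize to $\lambda_n\equiv 1$ and $x_1=x$, observe that $x_n=R_f^{n-1}x$ and $\sum_{k=1}^n\lambda_k=n$, and invoke Theorems~\ref{thm:ppa-CAT1-exist} and~\ref{thm:ppa-CAT1-conv}. The only additional content is your (correct) remark about the one-step index offset, which the paper leaves implicit.
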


\begin{proof}
 Set $\lambda_n =1$ for all $n\in \N$ 
 and let $\{x_n\}$ be a sequence defined by 
 $x_1=x$ and $x_{n+1}=R_{\lambda_n f} x_n$ 
 for all $n\in \N$.  
 Then we have $x_n=R_f^{n-1}x$ and $\sum_{k=1}^{n}\lambda_k=n$ 
 for all $n\in \N$.  
 Thus Theorems~\ref{thm:ppa-CAT1-exist} 
 and~\ref{thm:ppa-CAT1-conv} imply the conclusion. 
\end{proof}

\begin{remark}
 The part~(i) of Corollary~\ref{cor:conv-CAT1-single} 
 is related to~\cite[(i) of Theorem~7.1]{MR3463526}, 
 where it is shown that $\Argmin_X f$ is nonempty 
 if and only if there exists $w \in X$ such that 
 \begin{align*}
  \limsup_{n\to \infty} d(R_{f}y, R_{f}^nw) < \frac{\pi}{2}
 \end{align*} 
 for all $y\in X$. 
 On the other hand, 
 the former part of~(ii) is 
 a refinement of~\cite[(ii) of Theorem~7.1]{MR3463526}, 
 where it is additionally assumed that 
 \begin{align*}
  \limsup_{n\to \infty} d(R_{f}y, y_n) < \frac{\pi}{2}
 \end{align*} 
 whenever $\{y_n\}$ is a sequence in $X$ 
 which is $\Delta$-convergent to $y\in X$. 
\end{remark}

\begin{corollary}\label{cor:conv-CAT1-HS}
 Let $X$ be a nonempty closed convex admissible subset 
 of a Hilbert sphere $(S_H, \rho_{S_H})$, 
 both $f$ and $\{\lambda _n\}$ the same as in 
 Theorem~\ref{thm:ppa-CAT1-exist}, 
 and $\{x_n\}$ a sequence defined by 
 $x_1\in X$ and 
 \begin{align*}
  x_{n+1} = \Argmin_{y\in X} 
 \ck{f(y)+\frac{1}{\lambda_n} \tan \rho_{S_H}(y, x_{n})\sin \rho_{S_H}(y, x_{n})}
 \end{align*}
 for all $n\in \N$. 
 Then the following hold. 
 \begin{enumerate}
  \item[(i)] The set $\Argmin_X f$ is nonempty if and only if 
 $\{x_n\}$ is spherically bounded and 
  $\sup_{n}\rho_{S_H}(x_{n+1},x_{n})< \pi/2$; 
  \item[(ii)] if $\Argmin_X f$ is nonempty, then 
 $\{x_n\}$ is $\Delta$-convergent 
 to an element of $\Argmin_X f$ 
 and there exists a positive real number $C$ such that 
 \begin{align*}
  f(x_{n+1}) - \inf f(X) \leq 
  \frac{C}{\sum_{k=1}^{n}\lambda _k} \bigl(1-\cos \rho_{S_H}(u, x_1)\bigr)
 \end{align*}
 for all $u\in \Argmin_X f$ and $n\in \N$. 
 \end{enumerate}
 \end{corollary}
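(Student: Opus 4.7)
The plan is to observe that Corollary~\ref{cor:conv-CAT1-HS} is a direct consequence of Theorems~\ref{thm:ppa-CAT1-exist} and~\ref{thm:ppa-CAT1-conv}, once one verifies that $(X, \rho_{S_H})$ itself qualifies as an admissible complete $\CAT(1)$ space. The approach is essentially the same strategy used for Corollary~\ref{cor:conv-CAT1-single}: reduce the statement to the main theorems by identifying the ambient geodesic space correctly.

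First I would confirm the underlying geometric structure on $X$. From Section~\ref{sec:pre} we know that the Hilbert sphere $(S_H, \rho_{S_H})$ is a complete $\CAT(1)$ space, and the explicit parametrization of its geodesics shows that for any two points of $S_H$ at distance less than $\pi$ the unique geodesic connecting them lies inside every convex set containing them. Consequently $(X, \rho_{S_H})$ is uniquely $\pi$-geodesic, geodesic triangles in $X$ coincide with those in $S_H$, so the $\CAT(1)$ comparison inequality is inherited from the ambient sphere. Completeness of $X$ follows since $X$ is closed in the complete space $S_H$, and admissibility is part of the hypothesis on $X$. Hence $(X, \rho_{S_H})$ satisfies every assumption placed on the space in Theorems~\ref{thm:ppa-CAT1-exist} and~\ref{thm:ppa-CAT1-conv}.

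Next, I would observe that the recursion
\begin{align*}
 x_{n+1} = \Argmin_{y\in X}
 \ck{f(y)+\frac{1}{\lambda_n} \tan \rho_{S_H}(y, x_{n})\sin \rho_{S_H}(y, x_{n})}
\end{align*}
is precisely the scheme~\eqref{eq:ppa-CAT1} for the $\CAT(1)$ space $(X, \rho_{S_H})$ with its distance $\rho_{S_H}$ playing the role of $d$. Applying Theorem~\ref{thm:ppa-CAT1-exist} to this situation yields the equivalence in~(i), while Theorem~\ref{thm:ppa-CAT1-conv} yields both the quantitative bound and the $\Delta$-convergence asserted in~(ii), with $d$ replaced by $\rho_{S_H}$ throughout.

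There is no substantive obstacle here: the content of the corollary is simply the specialization of the main theorems to the Hilbert sphere setting, and the only nontrivial point is the routine verification that nonempty closed convex subsets of $(S_H, \rho_{S_H})$ are themselves complete $\CAT(1)$ spaces, a fact already implicit in the preliminaries.
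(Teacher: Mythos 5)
Your proposal is correct and follows exactly the paper's own argument: the authors likewise dispose of this corollary by noting that $(X,\rho_{S_H}\vert_{X\times X})$ is an admissible complete $\CAT(1)$ space and then invoking Theorems~\ref{thm:ppa-CAT1-exist} and~\ref{thm:ppa-CAT1-conv}. Your additional verification that a nonempty closed convex subset of the Hilbert sphere inherits unique $\pi$-geodesics, the comparison inequality, and completeness is a harmless elaboration of what the paper leaves implicit.
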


\begin{proof}
 Since $(X, \rho_{S_H}\vert_{X\times X})$ is 
 an admissible complete $\CAT(1)$ space, 
 Theorems~\ref{thm:ppa-CAT1-exist} 
 and~\ref{thm:ppa-CAT1-conv} imply the conclusion. 
\end{proof}

\begin{corollary}\label{cor:ppa-CATk}
 Let $\kappa$ be a positive real number, 
 $X$ a complete $\CAT(\kappa)$ space such that 
 $d(v,v')<\pi/(2\sqrt{\kappa})$ for all $v,v'\in X$, 
 $f$ a proper lower semicontinuous convex 
 function of $X$ into $\opclitvl{-\infty}{\infty}$, 
 $\{\lambda _n\}$ a sequence of positive real numbers 
 such that $\sum _{n=1}^{\infty} \lambda _n =\infty$, 
 and $\{x_n\}$ a sequence defined by 
 $x_1\in X$ and 
 \begin{align*}
  x_{n+1} = \Argmin_{y\in X} 
 \ck{f(y)+\frac{1}{\lambda_n} \tan \bigl(\sqrt{\kappa}d(y, x_{n})\bigr)
 \sin \bigl(\sqrt{\kappa}d(y, x_{n})\bigr)}
 \end{align*}
 for all $n\in \N$. 
 Then the following hold. 
 \begin{enumerate}
  \item[(i)] The set $\Argmin_X f$ is nonempty if and only if 
 \begin{align*}
  \inf_{y\in X}\limsup_{n\to \infty} d(y, x_n)<\frac{\pi}{2\sqrt{\kappa}}
  \quad \textrm{and} \quad  
  \sup_{n}d(x_{n+1},x_{n})<\frac{\pi}{2\sqrt{\kappa}}; 
 \end{align*}
  \item[(ii)] if $\Argmin_X f$ is nonempty, then 
 $\{x_n\}$ is $\Delta$-convergent 
 to an element of $\Argmin_X f$ 
 and there exists a positive real number $C$ such that 
 \begin{align*}
  f(x_{n+1}) - \inf f(X) \leq \frac{C}{\sum_{k=1}^{n}\lambda _k}
  \Bigl(1-\cos \bigl(\sqrt{\kappa}d(u, x_1)\bigr)\Bigr)
 \end{align*}
 for all $u\in \Argmin_X f$ and $n\in \N$. 
 \end{enumerate}
\end{corollary}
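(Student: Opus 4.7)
The plan is to reduce the $\CAT(\kappa)$ setting to the $\CAT(1)$ setting by a global rescaling of the metric. Set $\tilde d=\sqrt{\kappa}\,d$ on $X$. Then $(X,\tilde d)$ is a complete metric space (completeness is preserved by constant rescaling), and it is a $\CAT(1)$ space by the standard fact that multiplying the metric on a $\CAT(\kappa)$ space ($\kappa>0$) by $\sqrt{\kappa}$ produces a $\CAT(1)$ space (the model sphere of radius $1/\sqrt{\kappa}$ rescales to the unit sphere $\S^2$). Moreover the assumption $d(v,v')<\pi/(2\sqrt{\kappa})$ becomes $\tilde d(v,v')<\pi/2$, so $(X,\tilde d)$ is admissible in the sense of the paper.

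Next I would check that all remaining hypotheses transfer verbatim. The geodesics and the convex combinations $\alpha x\oplus(1-\alpha)y$ are unchanged by rescaling (the parametrization is only scaled), so $f$ remains proper, lower semicontinuous, and convex on $(X,\tilde d)$, and $\Argmin_X f$ is the same set. The sequence $\{\lambda_n\}$ and the summability hypothesis are unaffected. Since $\tan\bigl(\sqrt{\kappa}d(y,x_n)\bigr)\sin\bigl(\sqrt{\kappa}d(y,x_n)\bigr)=\tan\tilde d(y,x_n)\sin\tilde d(y,x_n)$, the recursion defining $\{x_n\}$ in the statement coincides exactly with the $\CAT(1)$ proximal point scheme \eqref{eq:ppa-CAT1} applied in $(X,\tilde d)$. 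Finally, asymptotic centers, $\Delta$-convergence, and the notion of spherical boundedness are defined purely via infima and $\limsup$s of distances, so they are covariant under the rescaling; in particular $\Delta$-convergence in $(X,\tilde d)$ is the same thing as $\Delta$-convergence in $(X,d)$.

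Applying Theorem~\ref{thm:ppa-CAT1-exist} in $(X,\tilde d)$ then gives part~(i): $\Argmin_X f$ is nonempty iff $\{x_n\}$ is spherically bounded in $(X,\tilde d)$ and $\sup_n \tilde d(x_{n+1},x_n)<\pi/2$, and these translate respectively to
\[
\inf_{y\in X}\limsup_{n\to\infty}d(y,x_n)<\frac{\pi}{2\sqrt{\kappa}}\qquad\text{and}\qquad \sup_{n}d(x_{n+1},x_n)<\frac{\pi}{2\sqrt{\kappa}}.
\]
Applying Theorem~\ref{thm:ppa-CAT1-conv} in $(X,\tilde d)$ gives part~(ii): $\{x_n\}$ is $\Delta$-convergent to some element of $\Argmin_X f$, and there is $C>0$ with
\[
f(x_{n+1})-\inf f(X)\leq \frac{C}{\sum_{k=1}^n\lambda_k}\bigl(1-\cos\tilde d(u,x_1)\bigr)=\frac{C}{\sum_{k=1}^n\lambda_k}\Bigl(1-\cos\bigl(\sqrt{\kappa}\,d(u,x_1)\bigr)\Bigr)
\]
for every $u\in \Argmin_X f$ and $n\in\N$.

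There is no real obstacle here beyond the bookkeeping of the rescaling. The only point that requires momentary care is checking that the notions imported from Section~\ref{sec:pre} (convexity, $\Delta$-convergence, asymptotic centers, admissibility, spherical boundedness) all behave correctly under a global scalar rescaling of the metric; each of these is either defined via geodesics (unchanged) or via infima/limsup of distances (equivariant), so the transfer is immediate.
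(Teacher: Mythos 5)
Your proposal is correct and follows exactly the paper's own argument: the paper's proof is precisely the one-line observation that $(X,d)$ is a complete $\CAT(\kappa)$ space if and only if $(X,\sqrt{\kappa}\,d)$ is a complete $\CAT(1)$ space, after which Theorems~\ref{thm:ppa-CAT1-exist} and~\ref{thm:ppa-CAT1-conv} apply. You merely spell out the (routine) verification that all hypotheses and conclusions transfer under the rescaling, which the paper leaves implicit.
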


\begin{proof}
 Since $(X, d)$ is a complete $\CAT(\kappa)$ space 
 if and only if $(X, \sqrt{\kappa}d)$ is a complete $\CAT(1)$ space, 
 Theorems~\ref{thm:ppa-CAT1-exist} 
and~\ref{thm:ppa-CAT1-conv} imply the conclusion. 
\end{proof}

\section*{Note added in proof}

We finally note that 
Theorem~\ref{thm:ppa-CAT1-exist}  
and the part~(ii) of Theorem~\ref{thm:ppa-CAT1-conv} 
were announced 
in the talk~\cite{Kohsaka-ACFPTO16-talk} 
based on~\cite{MR3463526, KimuraKohsaka-LNA16} 
and the present paper. 
On the other hand, Esp{\'{\i}}nola and Nicolae~\cite{EspinolaNicolae-arXiv16} 
studied the proximal point algorithm 
and the splitting proximal point algorithm 
for convex functions in $\CAT(\kappa)$ spaces with positive $\kappa$.  
The $\Delta$-convergence result in 
the part~(ii) of Corollary~\ref{cor:ppa-CATk} 
was also found in~\cite{EspinolaNicolae-arXiv16}. 
In~\cite{Espinola-Personal}, 
the authors in this paper and 
the authors in~\cite{EspinolaNicolae-arXiv16} 
confirmed that there is the overlapping stated above 
and that these two papers were independently written. 

\section*{Acknowledgment}
The authors would like to express their sincere appreciation 
to Professor Rafael Esp{\'{\i}}nola 
for kindly letting them know 
his joint work~\cite{EspinolaNicolae-arXiv16} 
with Professor Adriana Nicolae. 
This work was supported by JSPS KAKENHI Grant Numbers 
15K05007 and 25800094.


\begin{thebibliography}{99}

\bibitem{MR2780284}
   K. Aoyama, F. Kohsaka, W. Takahashi, 
   {\it Proximal point methods for monotone operators in Banach spaces},
   Taiwanese J. Math. \textbf{15} (2011), 259--281. 

\bibitem{MR3047087}
   M. Ba{\v{c}}{\'a}k, 
   {\it The proximal point algorithm in metric spaces},
   Israel J. Math. \textbf{194} (2013), 689--701. 

\bibitem{MR3241330}
   M. Ba{\v{c}}{\'a}k, 
   {\it Convex Analysis and Optimization in Hadamard Spaces},
   De Gruyter, Berlin, 2014. 

\bibitem{MR2798533}
   H. H. Bauschke and P. L. Combettes, 
   {\it Convex Analysis and Monotone Operator Theory in Hilbert Spaces},
  Springer, New York, 2011. 

\bibitem{MR491922}
   H. Br{\'e}zis and P.-L. Lions, 
   {\it Produits infinis de r\'esolvantes}, 
  Israel J. Math. \textbf{29} (1978), 329--345. 

\bibitem{MR1744486}
   M. R. Bridson and A. Haefliger, 
   {\it Metric Spaces of Non-positive Curvature}, 
   Springer-Verlag, Berlin, 1999. 

\bibitem{Espinola-Personal}
   R. Esp{\'{\i}}nola, 
   {\it Personal communication}, July 16th, 2016. 

\bibitem{MR2508878}
   R. Esp{\'{\i}}nola and A. Fern{\'a}ndez-Le{\'o}n, 
   {\it ${\rm CAT}(k)$-spaces, weak convergence and fixed points}, 
   J. Math. Anal. Appl. \textbf{353} (2009), 410--427. 

\bibitem{EspinolaNicolae-arXiv16}
   R. Esp{\'{\i}}nola and A. Nicolae, 
   {\it Proximal minimization in ${\rm CAT}(\kappa)$ spaces},
   J. Nonlinear Convex Anal. \textbf{17} (2016), 2329--2338. 

\bibitem{MR1092735}
   O. G{\"u}ler, 
   {\it On the convergence of the proximal point algorithm for convex
   minimization}, 
   SIAM J. Control Optim. \textbf{29} (1991), 403--419. 

\bibitem{MR1360608}
   J. Jost, 
   {\it Convex functionals and generalized harmonic maps into spaces of
   nonpositive curvature},
   Comment. Math. Helv. \textbf{70} (1995), 659--673. 

\bibitem{MR1451625}
   J. Jost, 
   {\it Nonpositive Curvature: Geometric and Analytic Aspects},
  Birkh\"auser Verlag, Basel, 1997. 

\bibitem{MR1652278}
   J. Jost, 
   {\it Nonlinear Dirichlet forms},
   New directions in Dirichlet forms, 
   Amer. Math. Soc., Providence, RI, 1998, pp.~1--47.  

\bibitem{MR1113394}
   W. S. Kendall, 
   {\it Convexity and the hemisphere},
   J. London Math. Soc. (2) \textbf{43} (1991), 567--576. 

\bibitem{MR3463526}
   Y. Kimura and F. Kohsaka, 
   {\it Spherical nonspreadingness of resolvents of convex functions in
   geodesic spaces},
   J. Fixed Point Theory Appl. \textbf{18} (2016), 93--115. 

\bibitem{KimuraKohsaka-LNA16}
   Y. Kimura and F. Kohsaka, 
   {\it Two modified proximal point algorithms for convex functions in Hadamard spaces}, 
   Linear Nonlinear Anal. \textbf{2} (2016), 69--86. 

\bibitem{MR3213144}
   Y. Kimura, S. Saejung, and P. Yotkaew, 
   {\it The Mann algorithm in a complete geodesic space with curvature
   bounded above},
   Fixed Point Theory Appl. (2013), 2013:336, 1--13. 

\bibitem{MR2927571}
   Y. Kimura and K. Sat{\^o}, 
   {\it Convergence of subsets of a complete geodesic space with curvature
   bounded above},
   Nonlinear Anal. \textbf{75} (2012), 5079--5085. 

\bibitem{MR3020188}
   Y. Kimura and K. Sat{\^o}, 
   {\it Halpern iteration for strongly quasinonexpansive mappings on a
   geodesic space with curvature bounded above by one},
   Fixed Point Theory Appl. (2013), 2013:7, 1--14. 

\bibitem{MR2416076}
   W. A. Kirk and B. Panyanak, 
   {\it A concept of convergence in geodesic spaces},
   Nonlinear Anal. \textbf{68} (2008), 3689--3696. 

\bibitem{Kohsaka-ACFPTO16-talk}
   F. Kohsaka, 
  {\it Existence and approximation of minimizers of convex functions
  in geodesic metric spaces}, 
  An invited talk in the 9th Asian Conference on Fixed
   Point Theory and Optimization 2016, 
   King Mongkut's University of Technology Thonburi, 
   Bangkok, Thailand, May 19th, 2016. 

\bibitem{MR0298899}
   B. Martinet, 
   {\it R\'egularisation d'in\'equations variationnelles par
   approximations successives},
   Rev. Fran\c caise Informat. Recherche Op\'erationnelle 
   \textbf{4} (1970), 154--158. 

\bibitem{MR1651416}
   U. F. Mayer, 
   {\it Gradient flows on nonpositively curved metric spaces and harmonic
   maps},
   Comm. Anal. Geom. \textbf{6} (1998), 199--253. 

\bibitem{MR0410483}
   R. T. Rockafellar, 
   {\it Monotone operators and the proximal point algorithm},
   SIAM J. Control Optim. \textbf{14} (1976), 877--898. 

\bibitem{MR2548424}
   W. Takahashi, 
   {\it Introduction to Nonlinear and Convex Analysis},
   Yokohama Publishers, Yokohama, 2009. 

\bibitem{MR3523548}
   T. Yokota, 
   {\it Convex functions and barycenter on ${\rm CAT}(1)$-spaces of
	small radii}, 
   J. Math. Soc. Japan \textbf{68} (2016), 1297--1323. 

\end{thebibliography}
\end{document}